\documentclass[onecolumn]{IEEEtran}
\usepackage[cmex10]{amsmath}
\usepackage{amsmath,amssymb,amsfonts,amsthm,mathrsfs}
\usepackage{cite}
\usepackage{graphicx}
\usepackage{subfigure}
 \newtheorem{theorem}{Theorem}[section]
 
 \newtheorem{proposition}[theorem]{Proposition}
 \newtheorem{lemma}[theorem]{Lemma}
  \newtheorem{definition}{Definition}[section]
 
  \newtheorem{assumption}{Assumption}
 \theoremstyle{remark}
 \newtheorem{remark}{Remark}
%

\ifCLASSINFOpdf
\else
\fi
%
%

\hyphenation{op-tical net-works semi-conduc-tor}

\begin{document}
\title{Convergence of  Bregman Alternating Direction Method with
Multipliers \\ for Nonconvex Composite Problems}

\author{Fenghui~Wang, Zongben~Xu$^*$,
        and~Hong-Kun~Xu
\thanks{$^*$corresponding author.}
\thanks{F. Wang is with School of Mathematics and Statistics, Xian Jiaotong
University, Xian 710049, P R China}
\thanks{Z. Xu is with School of Mathematics and Statistics, Xian Jiaotong
University, Xian 710049, P R China.}
\thanks{H.K. Xu is with Department of Applied Mathematics, National Sun
Yat-sen University, Kaohsiung 80424, Taiwan.}}


\maketitle

\begin{abstract}
The alternating direction method with multipliers (ADMM) has been
one of most powerful and successful methods for solving  various
convex or nonconvex composite problems that arise in the fields of
image \& signal processing and machine learning. In convex settings,
numerous convergence results have been established for ADMM as well
as its varieties. However, there have been few studies on the
convergence properties of ADMM under nonconvex frameworks, since the
convergence analysis of nonconvex algorithm is generally very difficult.
In this paper we study the Bregman modification of ADMM (BADMM),
which includes the conventional ADMM as a special case and can significantly
 improve the performance of  the algorithm. Under some
assumptions, we show that the iterative sequence generated by BADMM
converges to a stationary point of the associated augmented
Lagrangian  function. The obtained results underline the feasibility
of ADMM in applications under nonconvex settings.
\end{abstract}

\begin{IEEEkeywords}
nonconvex regularization, nonconvex sparse minimization,
alternating direction method, sub-analytic function, K-L inequality,
Bregman distance.
\end{IEEEkeywords}

\IEEEpeerreviewmaketitle

\section{Introduction}
Many problems arising in the fields of  signal \& image processing
and machine learning \cite{bpc,yl} involve finding a minimizer of
some composite objective functions. More specifically, such problems
can be formulated as:
\begin{align}\label{p1}
\min & \ f(x)+g(y)\nonumber\\
 \mathrm{s.t.} & \  Ax=By,
\end{align}
where $A\in\mathbb{R}^{m\times n_1}$ and $B\in\mathbb{R}^{m\times
n_2}$ are
 given matrices,  $f:\mathbb{R}^{n_1}\to\mathbb{R}$ is usually a
 (quadratic, or logistic)  loss function, and
$g:\mathbb{R}^{n_2}\to\mathbb{R}$ is often a regularizer such as the
$\ell_1$ norm or $\ell_{1/2}$ quasi-norm.

Because of  its separable structure, problem (\ref{p1}) can be
efficiently solved by the alternating direction method with
multipliers (ADMM), which decomposes  the original joint
minimization problem  into two  easily solved subproblems. The
standard ADMM for problem (\ref{p1}) takes the form:
 \begin{align}
y^{k+1}&=\arg\min\limits_{y\in\mathbb{R}^{n_2}} L_{\alpha}(x^{k},y,p^k)  \label{M1}\\
x^{k+1}&=\arg\min\limits_{x\in\mathbb{R}^{n_1}} L_{\alpha}(x,y^{k+1},p^k)  \label{M2}\\
p^{k+1}&=p^k+\alpha(Ax^{k+1}-By^{k+1}), \label{M3}
\end{align}
where $\alpha$ is a penalty parameter and
\setlength{\arraycolsep}{0.0em}
\begin{align*}
L_{\alpha}(x,y,p)&:=f(x)+g(y)+\langle p,
Ax-By\rangle\\
&\qquad +\frac\alpha2\|Ax-By\|^2
\end{align*}
is the associated augmented Lagrangian function with multiplier $p$.
Generally speaking, ADMM is first minimized with respect to $y$ for
fixed values of $p, x$, then with respect to $x$ with $p,y$ fixed,
and finally maximized with respect to $p$ with $x, y$ fixed.
Updating the dual variable  $p^k$  in the above system is a trivial
task, but this is not so simple for the primal variables  $x^k$ and
$y^k$. Indeed  in many cases, the $x$-subproblem (\ref{M2}) and
$y$-subproblem (\ref{M1}) cannot easily be solved.  Recently, the
Bregman modification of ADMM (BADMM) has been adopted by several
researchers to improve the performance of the conventional ADMM
algorithm  \cite{fwb,wb,wang,zhang}.  BADMM takes the following iterative
form: \footnote{If the solution to the $x$ or $y$-subproblem is not
unique, then $x^k$ or $y^k$ should be regarded as a selection from
their solution sets. } \setlength{\arraycolsep}{0.0em}
\begin{align}
y^{k+1}&=\arg\min\limits_{y\in\mathbb{R}^{n_2}} L_{\alpha}(x^{k},y,p^k)+\triangle_{\psi}(y,y^k) \label{A3}\\
x^{k+1}&=\arg\min\limits_{x\in\mathbb{R}^{n_1}} L_{\alpha}(x,y^{k+1},p^k)+\triangle_{\phi}(x,x^k) \label{A2}\\
p^{k+1}&=p^k+\alpha(Ax^{k+1}-By^{k+1}),\label{A4}
\end{align}
where $\triangle_{\psi}$ and $\triangle_{\phi}$ respectively denote
the Bregman distance with respect to function $\psi$ and $\phi.$ The
difference between this algorithm and the standard ADMM is that the
objective function in (\ref{M1})-(\ref{M2}) is replaced by the sum
of a Bregman distance function and the augmented Lagrangian
function. Moreover, as shown in \cite{wang,zhang,lp} and the
following section, an appropriate choice of Bregman distance does
indeed simplify the original subproblems.

 ADMM was introduced in the early 1970s \cite{gm,gm2},  and its convergence
  properties for convex objective functions have been extensively studied.
  The convergence of ADMM was first established
for strongly convex functions \cite{gm,gm2}, before being extended
to general convex functions \cite{eck,ecb}.  It has been shown that
ADMM converges at a sublinear rate of  $\mathcal{O}(1/k)$
\cite{hy,ms}, or $\mathcal{O}(1/k^2)$ for the accelerated version
\cite{gds}; furthermore, a linear convergence rate was also shown
under certain additional assumptions \cite{dy}. The convergence of
BADMM for convex objective functions has also been examined with the
Euclidean distance \cite{ct}, Mahalanobis distance \cite{zhang}, and
the general Bregman distance \cite{zhang}.

Recent studies on nonnegative matrix factorization, distributed
matrix factorization, distributed clustering, sparse zero variance
discriminant analysis, polynomial optimization, tensor
decomposition, and matrix completion have led to growing interest in
ADMM for nonconvex objective functions (see e.g.
\cite{hcw,ls,xyw,zk,zy}). It has been shown that the nonconvex ADMM
works extremely well for these particular examples.

However, because the convergence analysis of nonconvex algorithms is
generally very difficult, there have been few studies on the
convergence properties of ADMM under nonconvex frameworks. One major
difficulty is that the F\'{e}jer monotonicity of iterative sequences
does not hold in the absence of convexity. Very recently, \cite{hlr}
analyzed the convergence of ADMM for certain nonconvex consensus and
sharing problems. They demonstrated that   with $A$ and $B$  set to
the identity matrices, ADMM converges to {\em the set of stationary
solutions} as long as the penalty parameter $\alpha$ is sufficiently
large. To show the convergence of ADMM to {\em a stationary point},
additional assumptions are required on the functions involved. For
example, if $f$ and $g$ are both semi-algebraic,  \cite{lp} proved
that ADMM converges to a stationary point when $B$ is the identity
matrix. This result requires that  function $f$ is strongly convex
or matrix $A$ has full-column rank.

In this paper, we study the convergencev of BADMM under nonconvex
frameworks.  First, we extend the convergence of the BADMM from
semi-algebraic functions to sub-analytic functions.  In particular,
this implies that BADMM is convergent for logistic sparse loss functions,
which are not semi-algebraic.  Second, we establish a global
convergence theorem for cases when $B$ has full-column rank. This
allows us to choose $\phi\equiv0$, which covers a recent result in
\cite{lp}. We also study the case when $B$ does not have full-column
rank.  In this instance, a suitable Bregman distance also leads to
global BADMM convergence. This enhanced flexibility of BADMM enables
its application to more general cases. More importantly, the main
idea of our convergence analysis is different from that used in
\cite{lp}. Instead of employing an augmented Lagrangian function at
each iteration, we demonstrate global convergence using the descent
property of an auxiliary function.

The paper is organized as follows. In Section 2, we recall the
definitions of subdifferentials, Bregman distance, and
Kurdyka-{\L}ojasiewicz inequality. In Section 3, we establish the
global convergence of BADMM to a critical point
 under  certain assumptions. In Section 4, we
 conduct  experimental studies to
verify the convergence of BADMM.

\section{Preliminaries}

In what follows, $\mathbb{R}^n$ will stand for the $n$-dimensional
Euclidean space,
$$\langle x,y\rangle=x^{\top}y=\sum _{i=1}^n
x_iy_i, \ \|x\|=\sqrt{\langle x, x\rangle},$$ where
$x,y\in\mathbb{R}^n$ and $\top$ stands for the transpose operation.

\subsection{Subdifferentials}

Given a function $f:\mathbb{R}^n\to \mathbb{R}$ we denote by
$\mathrm{dom}f$ the domain of $f$, namely $\mathrm{dom}f:=\{x\in
\mathbb{R}^n: f(x)<+\infty \}$. A function $f$ is said to be proper
if $\mathrm{dom}f\neq\emptyset;$ lower  semicontinuous at the point
$x_0$ if
$$\liminf_{x\to x_0}f(x)\ge f(x_0).$$
 If $f$ is lower semicontinuous at every point of its domain of definition, then it is simply
 called a lower semicontinuous function.

\begin{definition}
Let $f:\mathbb{R}^n\to \mathbb{R}$ be a proper lower semi-continuous
function.
\begin{itemize}
  \item[(i)] Given $x\in \mathrm{dom} f,$ the Fr\'{e}chet subdifferential of $f$ at $x$,
  written by $\widehat{\partial} f(x)$, is the set of  all elements $u\in \mathbb{R}^n$ which satisfy
  \begin{align*}
\lim_{y\neq x}\inf_{y\to x}\frac{f(y)-f(x)-\langle u,
y-x\rangle}{\|x-y\|}\ge0.
  \end{align*}
  \item[(ii)] The limiting  subdifferential, or simply subdifferential, of $f$ at $x$,
  written by $\partial f(x)$, is defined as
  \begin{align*}
\partial f(x)=\{u\in \mathbb{R}^n: \exists x^k\to x, f(x^k)\to f(x),\\
u^k\in\widehat{\partial} f(x^k)\to u, k\to\infty\}.
  \end{align*}
\item[(iii)]
A critical point or stationary point of $f$ is a point $x_0$ in the
domain of $f$ satisfying $0\in\partial f(x_0).$\end{itemize}
\end{definition}

\begin{definition}
An element $z^*:=(x^{*}, y^*, p^*)$ is  called a critical point or
stationary point of the Lagrangian function $L_{\alpha}$ if it
satisfies:
\begin{align}
\left\{
  \begin{array}{ll}
 -A^{\top}p^*=\nabla f(x^{*}) \\
 B^{\top}p^*\in \partial g(y^*) \\
 Ax^*=By^*.
  \end{array}
\right.
\end{align}
\end{definition}
Let us now collect some basic properties of the subdifferential (see
\cite{mor}).

\begin{proposition}
Let $f:\mathbb{R}^n\to \mathbb{R}$ and $g:\mathbb{R}^n\to
\mathbb{R}$ be proper lower semi-continuous functions.
\begin{itemize}
  \item   $\widehat{\partial} f(x)\subset  \partial  f(x)$ for each $x\in \mathbb{R}^n.$
  Moreover, the first set is closed and convex, while the second is closed, and not necessarily convex.
  \item  Let $(u^k, x^k)$ be sequences such that $x^k\to x, u^k\to u, f(x^k)\to f(x)$ and $u^k\in \partial f(x^k).$
  Then by the definition of the subdifferential, we have $u\in \partial f(x).$
  \item The Fermat's rule remains true: if $x_0\in\mathbb{R}^n$ is a local minimizer of $f$, then $x_0$ is a
  critical point or stationary point of $f$, that is, $0\in \partial f(x_0).$
  \item If $f$ is continuously differentiable function, then $\partial (f+g)(x)=\nabla f(x)+ \partial g(x).$
\end{itemize}
\end{proposition}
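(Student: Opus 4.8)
The approach is to derive all four items directly from the definitions of the Fr\'echet and limiting subdifferentials stated above; these are standard facts (see \cite{mor}), so I only record the mechanism behind each. For the first item, the inclusion $\widehat\partial f(x)\subset\partial f(x)$ comes from feeding the constant sequences $x^k\equiv x$, $u^k\equiv u$ into the definition of $\partial f$. Convexity of $\widehat\partial f(x)$ follows because, for $u_1,u_2\in\widehat\partial f(x)$ and $t\in[0,1]$, the difference quotient attached to $u:=tu_1+(1-t)u_2$ is the corresponding convex combination of the quotients for $u_1$ and $u_2$, and $\liminf$ is superadditive, so the inequality $\liminf_{y\to x}(\cdot)\ge 0$ passes to $u$; closedness of $\widehat\partial f(x)$ follows because, for $u_n\to u$ with $u_n\in\widehat\partial f(x)$, the quotient for $u$ equals the quotient for $u_n$ plus a term of modulus at most $\|u_n-u\|$, which is killed in the limit. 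Closedness of $\partial f(x)$ for fixed $x$ is then the special case $x^k\equiv x$ of the second item, while non-convexity of $\partial f(x)$ in general is exhibited by $f=-\lvert\cdot\rvert$ at the origin, where $\partial f(0)=\{-1,1\}$.

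For the second item I would argue by a diagonal extraction: given $u^k\in\partial f(x^k)$, pick for each $k$ sequences $x^{k,j}\to x^k$ with $f(x^{k,j})\to f(x^k)$ and $u^{k,j}\in\widehat\partial f(x^{k,j})$ with $u^{k,j}\to u^k$, then choose $j_k$ so that $\|x^{k,j_k}-x^k\|$, $\lvert f(x^{k,j_k})-f(x^k)\rvert$ and $\|u^{k,j_k}-u^k\|$ are each below $1/k$; combined with $x^k\to x$, $f(x^k)\to f(x)$, $u^k\to u$, the sequence $(x^{k,j_k},u^{k,j_k})$ meets every requirement in the definition of $\partial f(x)$, giving $u\in\partial f(x)$. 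Fermat's rule is then immediate from the Fr\'echet definition: a local minimizer $x_0$ satisfies $f(y)-f(x_0)\ge 0$ near $x_0$, so the quotient with $u=0$ is nonnegative and $0\in\widehat\partial f(x_0)\subset\partial f(x_0)$.

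Finally, for the sum rule with $f\in C^1$, I would first establish $\widehat\partial(f+g)(x)=\nabla f(x)+\widehat\partial g(x)$ by splitting the difference quotient of $f+g$ at $x$ attached to $u$ into $\frac{f(y)-f(x)-\langle\nabla f(x),\,y-x\rangle}{\|y-x\|}$, which tends to $0$ by Fr\'echet differentiability of $f$, plus the difference quotient of $g$ at $x$ attached to $u-\nabla f(x)$; hence $u\in\widehat\partial(f+g)(x)$ if and only if $u-\nabla f(x)\in\widehat\partial g(x)$. Passing to limits in the definition of $\partial(f+g)$ and using continuity of $\nabla f$ (so that $\nabla f(x^k)\to\nabla f(x)$ along any admissible sequence) then lifts this identity to $\partial(f+g)(x)=\nabla f(x)+\partial g(x)$. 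I do not expect a genuine obstacle anywhere in this proposal; the only point requiring care is the bookkeeping of the nested (diagonal) sequences together with the convergence of the corresponding function values, which is exactly why in the body of the paper these facts are simply quoted from \cite{mor}.
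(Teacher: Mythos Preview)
Your proposal is correct, and in fact goes well beyond what the paper does: in the paper this proposition is stated without proof, with only a reference to \cite{mor}. So there is no ``paper's own proof'' to compare against; you have supplied one where the authors merely cite the literature.

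Your arguments are the standard ones and I see no genuine gap. A few small remarks. In the sum rule, the step that deserves an explicit sentence is why $g(x^k)\to g(x)$ along the approximating sequence for $\partial(f+g)(x)$: this follows because $f$, being $C^1$, is continuous, so $(f+g)(x^k)\to(f+g)(x)$ together with $f(x^k)\to f(x)$ forces $g(x^k)\to g(x)$; you implicitly use this but do not spell it out. Likewise, in the converse inclusion you need that the $x^k$ approximating $\partial g(x)$ also give $(f+g)(x^k)\to(f+g)(x)$, which again is immediate from continuity of $f$. Finally, your example $f(t)=-\lvert t\rvert$ for non-convexity of $\partial f(0)$ is the canonical one and is fine; note only that the proposition's hypothesis that $f$ be proper is automatic here since $f$ is real-valued on all of $\mathbb{R}^n$.
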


 A function $f$ is said to be
 {\em  $\ell_{f}$-Lipschitz continuous}  $(\ell_{f}\ge 0)$ if
 $$\|f(x)-f(y)\|\le\ell_{f}\|x-y\|$$
 for any $x,y\in\mathrm{dom}f$; {\em  $\mu$-strongly convex} $(\mu>0)$ if
 \begin{align}\label{B1}
 f(y)\ge f(x)+\langle\xi(x), y-x\rangle+\frac\mu2\|y-x\|^2,
 \end{align}
 for any $x,y\in\mathrm{dom}f$ and $\xi(x)\in\partial f(x).$

\subsection{Kurdyka-{\L}ojasiewicz inequality}\label{s1}

The Kurdyka-{\L}ojasiewicz (K-L) inequality plays an important role
in our subsequent analysis. This inequality was first introduced
by {\L}ojasiewicz \cite{loj} for real real analytic functions, and
then was extended by Kurdyka \cite{kur} to smooth functions whose
graph belongs to an o-minimal structure, and recently was further
extended to  nonsmooth sub-analytic functions \cite{bdo}.

\begin{definition}[K-L inequality]
A function $f:\mathbb{R}^n\to \mathbb{R}$ is said to satisfy the
 K-L inequality at $x_0$ if
 there exists $\eta>0, \delta>0, \varphi\in\mathscr{A}_{\eta}$, such that for all
$x\in\mathcal{O}(x_0,\delta)\cap\{x: f(x_0)<f(x)<f(x_0)+\eta\}$
\begin{align*}
 \varphi'(f(x)-f(x_0))\mathrm{dist}(0, \partial f(x))\ge1,
\end{align*}
where $\mathrm{dist}(x_0, \partial f(x)):=\inf\{\|x_0-y\|:y\in
\partial f(x)\},$ and
 $\mathscr{A}_{\eta}$ stand for the class of functions $\varphi:[0,\eta)\to
\mathbb{R}^{+}$ such that (a) $\varphi$ is  continuous on
$[0,\eta)$; (b) $\varphi$ is smooth concave on $(0,\eta)$; (c)
 $\varphi(0)=0, \varphi'(x)>0, \forall x\in (0,\eta)$.
\end{definition}

The following is an extension of the conventional K-L inequality
\cite{bst}.

\begin{lemma}[K-L inequality on compact subsets]\label{KL}
Let $f:\mathbb{R}^n\to \mathbb{R}$ be a proper lower semi-continuous
function and let $\Omega\subseteq \mathbb{R}^n$ be a compact set. If
$f$ is a constant on $\Omega$ and $f$ satisfies the K-L inequality
at each point in $\Omega$, then
 there exists $\eta>0, \delta>0, \varphi\in\mathscr{A}_{\eta}$, such that  for all $x_0\in\Omega$ and for all
$x\in\{x\in\mathbb{R}^n:\mathrm{dist}(x,\Omega)<\delta)\}\cap\{x\in\mathbb{R}^n:
f(x_0)<f(x)<f(x_0)+\eta\}$,
\begin{align*}
 \varphi'(f(x)-f(x_0))\mathrm{dist}(0, \partial f(x))\ge1.
\end{align*}
\end{lemma}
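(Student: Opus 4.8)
The plan is to run a standard compactness argument that reduces the desired uniform inequality on a tube around $\Omega$ to finitely many pointwise K-L inequalities, and then merges the associated desingularizing functions into a single one. Write $\bar{f}$ for the constant value of $f$ on $\Omega$. For each $z\in\Omega$, the hypothesis that $f$ satisfies the K-L inequality at $z$ supplies constants $\eta_z>0$, $\delta_z>0$ and a function $\varphi_z\in\mathscr{A}_{\eta_z}$ such that $\varphi_z'(f(x)-\bar{f})\,\mathrm{dist}(0,\partial f(x))\ge 1$ whenever $x\in\mathcal{O}(z,\delta_z)$ and $\bar{f}<f(x)<\bar{f}+\eta_z$. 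The open balls $\{\mathcal{O}(z,\delta_z/2):z\in\Omega\}$ form an open cover of the compact set $\Omega$, so I extract a finite subcover with centers $z_1,\dots,z_p$ and abbreviate $\delta_i:=\delta_{z_i}$, $\eta_i:=\eta_{z_i}$, $\varphi_i:=\varphi_{z_i}$.

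Next I set $\eta:=\min_{1\le i\le p}\eta_i>0$, $\delta:=\tfrac{1}{2}\min_{1\le i\le p}\delta_i>0$, and $\varphi:=\sum_{i=1}^{p}\varphi_i$, viewed as a function on $[0,\eta)$ (legitimate since $\eta\le\eta_i$ for each $i$). I then verify that $\varphi\in\mathscr{A}_{\eta}$: it is a finite sum of functions that are continuous on $[0,\eta)$, smooth and concave on $(0,\eta)$, and satisfy $\varphi_i(0)=0$ with $\varphi_i'>0$ on $(0,\eta)$, and every one of these properties is preserved under finite summation. In particular one gets the crucial pointwise domination of derivatives $\varphi'(s)=\sum_{i=1}^{p}\varphi_i'(s)\ge\varphi_i'(s)>0$ for every index $i$ and every $s\in(0,\eta)$.

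Finally, take any $x_0\in\Omega$ and any $x$ with $\mathrm{dist}(x,\Omega)<\delta$ and $\bar{f}<f(x)<\bar{f}+\eta$. Choose $w\in\Omega$ with $\|x-w\|<\delta$ and an index $i$ with $w\in\mathcal{O}(z_i,\delta_i/2)$; then $\|x-z_i\|\le\|x-w\|+\|w-z_i\|<\delta+\delta_i/2\le\delta_i$, so $x\in\mathcal{O}(z_i,\delta_i)$. Since $f$ is constant on $\Omega$ we have $f(z_i)=\bar{f}=f(x_0)$, hence $f(z_i)<f(x)<f(z_i)+\eta\le f(z_i)+\eta_i$, which places $x$ exactly in the region where the K-L inequality at $z_i$ is valid. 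Therefore $\varphi'(f(x)-f(x_0))\,\mathrm{dist}(0,\partial f(x))\ge\varphi_i'(f(x)-f(z_i))\,\mathrm{dist}(0,\partial f(x))\ge1$, which is the claimed inequality.

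The only step that requires any thought — and the place where a careless attempt would break down — is producing one $\varphi$ that simultaneously dominates all the $\varphi_i$: one cannot simply take a pointwise maximum of the $\varphi_i$, since that would in general destroy concavity. Summation works precisely because positivity of each $\varphi_i'$ forces $\varphi'\ge\varphi_i'$ while concavity, continuity, smoothness, and the normalization at $0$ are all additive. Everything else is routine bookkeeping with a finite subcover and the triangle inequality, relying essentially on the fact that $f$ is constant on $\Omega$ so that the anchor value $f(x_0)$ does not depend on the particular $x_0\in\Omega$ chosen.
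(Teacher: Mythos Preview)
Your argument is correct and is exactly the standard compactness-plus-summation proof of the uniformized K-L inequality. Note, however, that the paper does not actually prove this lemma: it merely states it and cites \cite{bst}, where precisely the argument you wrote appears. So there is nothing to compare against in the paper itself; your proof supplies what the authors chose to import.
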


Typical functions satisfying the K-L inequality include strongly
convex functions, real analytic functions, semi-algebraic functions
and sub-analytic functions.

A subset $C\subset \mathbb{R}^n$ is said to be {\em semi-algebraic}
if it can be written as
  \begin{align*}
   C=\bigcup_{j=1}^r\bigcap_{i=1}^{s}\{x\in \mathbb{R}^n: g_{i,j}(x)=0, h_{i,j}(x)<0\},
  \end{align*}
  where $g_{i,j},h_{i,j}:\mathbb{R}^n\to\mathbb{R}$ are real polynomial functions.
Then a function $f:\mathbb{R}^n\to \mathbb{R}$ is called {\em
semi-algebraic}  if its graph
  \begin{align*}
  \mathrm{G}(f):=\{(x,y)\in\mathbb{R}^{n+1}:f(x)=y\}
  \end{align*}
  is a semi-algebraic subset in $\mathbb{R}^{n+1}$. For example, the $\ell_{q}$ quasi norm
  $\|x\|_{q}:=(\sum_i|x_i|^{q})^{1/q}$ with $0<q\le1$, the sup-norm $\|x\|_{\infty}:=\max_i|x_i|,$
  the Euclidean norm $\|x\|$, $\|Ax-b\|^{q}_{q}$,
  $\|Ax-b\|$ and $\|Ax-b\|_\infty$ are all semi-algebraic functions \cite{bst,yin}.

A real function on  $\mathbb{R}$ is said to be {\em analytic} if it
possesses derivatives of all orders
 and agrees with its Taylor series in a neighborhood of every point. For a real function $f$  on
 $\mathbb{R}^n$, it is said to be {\em analytic} if the function of one variable
 $g(t):=f(x+ty)$ is analytic for any $x,y\in\mathbb{R}^n$.
It is readily seen that  real polynomial functions such as quadratic
functions
  $\|Ax-b\|^2$  are analytic. Moreover the $\varepsilon$-smoothed $\ell_{q}$ norm
  $\|x\|_{\varepsilon,q}:=\sum_i(x_i^2+\varepsilon)^{q/2}$ with $0<q\le1$ and the logistic
  loss function $\log(1+e^{-t})$ are also examples for real analytic functions \cite{yin}.

A subset $C\subset \mathbb{R}^n$ is said to be {\em sub-analytic} if
it can be written as
  \begin{align*}
   C=\bigcup_{j=1}^r\bigcap_{i=1}^{s}\{x\in \mathbb{R}^n: g_{i,j}(x)=0, h_{i,j}(x)<0\},
  \end{align*}
  where $g_{i,j},h_{i,j}:\mathbb{R}^n\to\mathbb{R}$ are real analytic functions.
Then a function $f:\mathbb{R}^n\to \mathbb{R}$ is called {\em
sub-analytic} if its graph $\mathrm{G}(f)$ is a sub-analytic subset
in $\mathbb{R}^{n+1}$. It is clear that both real analytic and
semi-algebraic functions are sub-analytic. Generally speaking, the
sum of of two sub-analytic functions is not necessarily
sub-analytic. As shown in \cite{bdo,yin}, for two sub-analytic
functions, if at least one function maps bounded sets to bounded
sets, then their sum is also sub-analytic. In particular, the sum of
a sub-analytic function and a analytic function is sub-analytic.
Some sub-analytic functions that are widely used are as follows:
\begin{itemize}
\item $\|Ax-b\|^2+\lambda\|y\|^q_q $;
\item $\|Ax-b\|^2+\lambda\sum_i(y^2_i+\varepsilon)^{q/2}$;
  \item
  $\frac1n\sum_{i=1}^n\log(1+\exp(-c_i(a_i^{\top}x+b))+\lambda\|y\|^q_q$;
\item
  $\frac1n\sum_{i=1}^n\log(1+\exp(-c_i(a_i^{\top}x+b))+\lambda\sum_i(y^2_i+\varepsilon)^{q/2}$.
\end{itemize}

\subsection{Bregman distance}
The  Bregman distance, first introduced  in 1967 \cite{berg}, plays
an important role in various iterative algorithms. As a
generalization of squared Euclidean distance, the Bregman distance
share many similar nice properties of the Euclidean distance.
However, the Bregman distance is  not a metric, since it does not
satisfy the
 triangle inequality nor symmetry.
 For a convex differential function $\phi$, the associated Bregman distance is defined as
\begin{align*}
\triangle_{\phi}(x,y)=\phi(x)-\phi(y)-\langle\nabla\phi(y),x-y\rangle.
\end{align*}
In particular, if we let $\phi(x):=\|x\|^2$ in the above, then it is
reduced to $\|x-y\|^2$, namely the classical Euclidean distance.
Some nontrivial examples of Bregman distance include \cite{Ban}:
\begin{itemize}
\item Itakura-Saito distance:  $\sum_ix_i(\log
  x_i/y_i)- \sum_i(x_i-y_i)$;
  \item Kullback-Leibler divergence: $\sum_ix_i(\log
  x_i/y_i)$;
\item Mahalanobis distance: $\|x-y\|^2_Q=\langle Qx, x\rangle$
  with $Q$ a symmetric positive definite matrix.
\end{itemize}

Let us now collect some useful properties about Bregman distance.

 \begin{proposition}\label{pro}
Let $\phi$ be a convex differential function and
$\triangle_{\phi}(x,y)$ the associated  Bregman distance.
\begin{itemize}
  \item Non-negativity: $\triangle_{\phi}(x,y)\ge0,\triangle_{\phi}(x,x)=0$ for all $x,y$.
  \item Convexity: $\triangle_{\phi}(x,y)$ is convex in $x$, but not necessarily in $y$.
  \item Strong Convexity: If $\phi$ is $\delta$-strongly convex, then $\triangle_{\phi}(x,y)
  \ge\frac{\delta}{2}\|x-y\|^2$ for all $x,y$.
\end{itemize}
 \end{proposition}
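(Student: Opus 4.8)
All three items follow directly from the definition $\triangle_{\phi}(x,y)=\phi(x)-\phi(y)-\langle\nabla\phi(y),x-y\rangle$ together with the convexity of $\phi$, so my plan is to verify each one in turn. For the non-negativity I would appeal to the first-order characterization of convexity of the differentiable function $\phi$: the inequality $\phi(x)\ge\phi(y)+\langle\nabla\phi(y),x-y\rangle$ is precisely $\triangle_{\phi}(x,y)\ge0$, and setting $x=y$ collapses the right-hand side to $\phi(x)$ and the inner product to $0$, giving $\triangle_{\phi}(x,x)=0$.

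Next I would establish the convexity in the first argument. Fixing $y$, the function $x\mapsto\phi(y)+\langle\nabla\phi(y),x-y\rangle$ is affine in $x$, so $x\mapsto\triangle_{\phi}(x,y)$ is the sum of the convex function $\phi$ and an affine function, hence convex. To show that convexity in $y$ may genuinely fail, I would exhibit a one-dimensional counterexample such as $\phi(t)=e^{t}$, for which $\triangle_{\phi}(x,y)=e^{x}-e^{y}(1+x-y)$ and $\partial^{2}_{yy}\triangle_{\phi}(x,y)=e^{y}(y-x+1)$, which is negative whenever $y<x-1$.

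Finally, for the strong-convexity estimate I would use that, $\phi$ being differentiable, $\partial\phi(y)=\{\nabla\phi(y)\}$, so the defining inequality \eqref{B1} of $\delta$-strong convexity, applied with the subgradient equal to $\nabla\phi(y)$ and with the roles of the two points interchanged, reads $\phi(x)\ge\phi(y)+\langle\nabla\phi(y),x-y\rangle+\frac{\delta}{2}\|x-y\|^{2}$; rearranging yields $\triangle_{\phi}(x,y)\ge\frac{\delta}{2}\|x-y\|^{2}$. None of these steps presents a real obstacle, since each is a one-line manipulation of the definition; the only points that warrant a little care are choosing a clean counterexample for the non-convexity-in-$y$ claim and observing that the differentiability of $\phi$ is exactly what lets the subgradient in \eqref{B1} be replaced by the gradient, which makes the last bound immediate.
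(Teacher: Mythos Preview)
Your argument is correct. The paper itself does not supply a proof of this proposition; it merely records these three facts about Bregman distances as standard properties and uses them later (for instance, the strong-convexity bound is exactly what is invoked to derive \eqref{B2}). Your verification is the natural one: the first-order convexity inequality for non-negativity, the ``convex plus affine'' observation for convexity in the first argument together with an explicit counterexample for the second, and the direct specialization of \eqref{B1} with $\xi(y)=\nabla\phi(y)$ for the strong-convexity lower bound. The counterexample with $\phi(t)=e^{t}$ checks out, since $\partial_{yy}^{2}\triangle_{\phi}(x,y)=e^{y}(y-x+1)$ indeed changes sign. There is nothing to add.
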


 As shown in the below, an appropriate  choice of Bregman distance will
 simplify
 the $x$ and $y$-subproblems, which in turn  improve the performance of the
  algorithm. For example, in $y$-subproblem \eqref{A3}, when taking
 $g(y)=\|y\|^{1/2}_{1/2}, \psi\equiv0,$ then the problem is minimizing
 function
 \begin{align*}
\|y\|^{1/2}_{1/2}-\langle p^k, y\rangle+\frac\alpha2\|By-Ax^k\|^2.
 \end{align*}
In general finding a minimizer  of this function
 is not a easy task. However, if we take $\psi=\frac{\mu}{2}\|y\|^2-\frac\alpha2\|By-Ax^k-p^k/\alpha\|^2$
 with $\mu>\alpha\|B\|^2$, then it is transformed  into minimizing a
 problem of
 \begin{align*}
\|y\|^{1/2}_{1/2}+\frac{\alpha}{2\mu}\|y-(y^k-\mu^{-1}B^{\top}(By^k-Ax^k-p^k/\alpha))\|^2.
 \end{align*}
Such a problem has a closed form solution (see \cite{xc}), and thus
it can be very easily solved.

\subsection{Basic assumption}

We need the following basic assumptions on problem \eqref{p1}.
A basic assumption to guarantee the convergence of the BADMM
is that the matrix $A$ has full-row rank.  The
only difference between Assumptions \ref{j1} and \ref{j2} is: one
needs $B$ having full column rank in Assumption \ref{j1}, while in
Assumption \ref{j2} one needs $\psi$ being strongly convex. It worth
noting that one can choose $\psi\equiv0$ under Assumption \ref{j1},
so that the BADMM includes the standard ADMM
as a special case. It is also worth noting that  the choice of
$\psi\equiv0$ is not available under Assumption \ref{j2}.

\begin{assumption}\label{j1}
Let  $\min(\mu_0,\mu_1)>0,$  $f:\mathbb{R}^{n_1}\to \mathbb{R}$ a
continuous differential function and $g:\mathbb{R}^{n_2}\to \mathbb{R}$
a proper lower semi-continuous functions. Assume that the following hold.
\begin{itemize}
  \item[(a)] $AA^{\top}\succeq \mu_0I$ and $B$  is injective;
  \item[(b)]  either $L_{\alpha}(x,y,p)$  with respect to $x$
  or $\phi$ is $\mu_1$ strongly convex;
  \item[(c)] $f+g$ is a sub-analytic function, and $\nabla f,
  \nabla\phi$ and $\nabla\psi$ are Lipshitz continuous.
\end{itemize}
\end{assumption}

In condition (b), the strong convexity of $\phi$ is easily attained,
for example $\phi=\frac{\mu_1}{2}\|x\|^2,$ while the strong
convexity of $L_{\alpha}(x,y,p)$ in $x$ can be deduced from some
standard assumptions, for example Neumann boundary condition in
image processing \cite{ess}. Condition (b) will be used to guarantee
the sufficient descent property of the augmented Lagrangian
functions. More specifically, it implies
\begin{align}\label{B2}
L_{\alpha}(x^{k+1},y^{k+1},p^{k})&\le
L_{\alpha}(x^{k},y^{k+1},p^{k})-\frac{\mu_1}{2}\|x^{k+1}-x^k\|^2,
\end{align}
where $(x^k,y^k,p^k)$ is generated by algorithm
\eqref{A3}-\eqref{A4}. As a matter of fact, if $L_{\alpha}(x,y,p)$
with respect to $x$ is $\mu_1$-strongly convex, then
$L_{\alpha}(x,y,p)+\triangle_\phi$ is also $\mu_1$-strongly convex
because $\triangle_\phi$ is convex from Proposition \ref{pro}. Thus
the desired  inequality will follow from the definition of strong
convexity and Proposition \ref{pro}. If $\phi$ is strongly convex,
then it follows again from Proposition \ref{pro} that
\begin{align*}
\triangle_\phi(x^{k+1},x^k)\ge\frac{\mu_1}{2}\|x-x^k\|^2,
\end{align*}
which together with the definition of $x^k$ yields the desired
inequality.

The condition that $f+g$ is sub-analytic in (c) will be used to
guarantee the auxiliary function constructed in the following
section satisfying the K-L inequality. We  notice  that all
functions mentioned in subsection \ref{s1} satisfy  assumption (c).
The Lipschitz continuity is a standard assumption for various
algorithms, even in convex settings.

We also consider the BADMM under another set of conditions listed in Assumption
\ref{j2} below. The only difference between Assumptions \ref{j1} and \ref{j2}
is that one needs $B$ having full column rank in Assumption \ref{j1},
where in Assumption \ref{j2} we assume that $\psi$ is strongly convex. It is worth 
noting that one can choose $\psi\equiv0$ under Assumption \ref{j1}, so that
 the BADMM includes the standard ADMM as a special case.

\begin{assumption}\label{j2}
Let  $\min(\mu_0,\mu_1)>0,$  $f:\mathbb{R}^{n_1}\to \mathbb{R}$ a
continuous differential function and $g:\mathbb{R}^{n_2}\to \mathbb{R}$
a proper lower semi-continuous functions. Assume that the following hold.
\begin{itemize}
  \item[(a')] $AA^{\top}\succeq \mu_0I$ and $\psi$ is $\mu_2$-strongly convex.
  \item[(b)]  either $L_{\alpha}(x,y,p)$  with respect to $x$ or $\phi$ is $\mu_1$ strongly convex.
  \item[(c)] $f+g$ is a sub-analytic function, and $\nabla f, \nabla\phi$ and $\nabla\psi$ are Lipshitz continuous.
\end{itemize}
\end{assumption}

\section{Convergence Analysis}

In this section we  prove the convergence of BADMM under two
different assumptions. In both assumptions, the parameter $\alpha$
is chosen so that
\begin{align*}
 \alpha>\frac{4((\ell_{f}+\ell_{\phi})^2+\ell_{\phi}^2)}{\mu_1\mu_0},
\end{align*}
where $\ell_{f}$ and $\ell_{\phi}$ respectively stand for the
Lipshitz constant of functions $f$ and $\phi$.

According to a recent work \cite{Al96}, the key point for
convergence analysis of  nonconvex algorithms is to show the
descent property of the augmented Lagrangian function. This is however not
easily attained since the dual variable is updated by maximizing
the  augmented Lagrangian function. As an alternative way, we construct an
auxiliary function below, which helps us to deduce the global
convergence of BADMM.


\subsection{The case $B$  is injective}

\begin{lemma}\label{E1}
Let Assumption \ref{j1} be fulfilled. Then there exists $\sigma_i>0,
i=0,1$ such that
\begin{align*}
\sigma_1\|x^{k+1}-x^k\|^2\le\hat{L}(x^{k},y^{k},p^{k},x^{k-1})
-\hat{L}(x^{k+1},y^{k+1},p^{k+1},x^k),
\end{align*}
 where
 $\hat{L}(x,y,p,\hat{x}):=L_\alpha(x,y,p)+\frac{\sigma_0}{2}
\|x-\hat{x}\|^2.$
\end{lemma}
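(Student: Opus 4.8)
The plan is to establish the descent inequality for the auxiliary function $\hat L$ by chaining together three estimates: a descent estimate for the $y$-update, a descent estimate for the $x$-update, and an \emph{ascent} estimate for the $p$-update that must be controlled by the previous two.

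First I would handle the primal steps. From the optimality of $y^{k+1}$ in \eqref{A3} and the convexity of $\triangle_\psi(\cdot,y^k)$ in its first argument (Proposition \ref{pro}), one gets
\begin{align*}
L_\alpha(x^k,y^{k+1},p^k)+\triangle_\psi(y^{k+1},y^k)\le L_\alpha(x^k,y^k,p^k),
\end{align*}
hence $L_\alpha(x^k,y^{k+1},p^k)\le L_\alpha(x^k,y^k,p^k)$ since $\triangle_\psi\ge0$. For the $x$-step, condition (b) of Assumption \ref{j1} gives exactly \eqref{B2}, i.e.\ $L_\alpha(x^{k+1},y^{k+1},p^k)\le L_\alpha(x^k,y^{k+1},p^k)-\frac{\mu_1}{2}\|x^{k+1}-x^k\|^2$. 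Combining, $L_\alpha(x^{k+1},y^{k+1},p^k)\le L_\alpha(x^k,y^k,p^k)-\frac{\mu_1}{2}\|x^{k+1}-x^k\|^2$.

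Next I would control the dual ascent. Since $p^{k+1}-p^k=\alpha(Ax^{k+1}-By^{k+1})$, we have $L_\alpha(x^{k+1},y^{k+1},p^{k+1})-L_\alpha(x^{k+1},y^{k+1},p^k)=\langle p^{k+1}-p^k,Ax^{k+1}-By^{k+1}\rangle=\frac1\alpha\|p^{k+1}-p^k\|^2$. The crucial ingredient is then to bound $\|p^{k+1}-p^k\|$ by the primal increments. Here the full-row-rank assumption $AA^\top\succeq\mu_0 I$ enters: the $x$-optimality condition reads $A^\top p^k+\nabla f(x^{k+1})+\nabla\phi(x^{k+1})-\nabla\phi(x^k)+\alpha A^\top(Ax^{k+1}-By^{k+1})=0$, i.e.\ $A^\top p^{k+1}=-\nabla f(x^{k+1})-\nabla\phi(x^{k+1})+\nabla\phi(x^k)$. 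Subtracting the analogous identity at step $k-1$ and using $\|p^{k+1}-p^k\|^2\le\mu_0^{-1}\|A^\top(p^{k+1}-p^k)\|^2$ together with the Lipschitz bounds on $\nabla f,\nabla\phi$ gives
\begin{align*}
\|p^{k+1}-p^k\|^2\le\frac{2}{\mu_0}\big((\ell_f+\ell_\phi)^2\|x^{k+1}-x^k\|^2+\ell_\phi^2\|x^k-x^{k-1}\|^2\big).
\end{align*}
I expect this step — extracting $\|p^{k+1}-p^k\|$ purely in terms of $x$-increments, and doing so with constants that match the stated lower bound on $\alpha$ — to be the main obstacle, since it is what forces the auxiliary proximal term $\frac{\sigma_0}2\|x-\hat x\|^2$ into the Lyapunov function (note $B$ being injective is what keeps $y^{k+1}$ from appearing uncontrollably, via $Ax^{k+1}-By^{k+1}$ recovering $y$-terms, but careful bookkeeping still routes everything through $x$).

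Finally I would assemble: adding the primal and dual estimates,
\begin{align*}
L_\alpha(x^{k+1},y^{k+1},p^{k+1})\le L_\alpha(x^k,y^k,p^k)-\frac{\mu_1}{2}\|x^{k+1}-x^k\|^2+\frac{1}{\alpha}\|p^{k+1}-p^k\|^2,
\end{align*}
then substituting the bound on $\|p^{k+1}-p^k\|^2$ and choosing $\sigma_0:=\frac{2\ell_\phi^2}{\mu_0\alpha}$ so that the ``$\|x^k-x^{k-1}\|^2$'' term is absorbed by the difference of the new proximal terms $\frac{\sigma_0}2\|x^k-x^{k-1}\|^2-\frac{\sigma_0}2\|x^{k+1}-x^k\|^2$. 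Rearranging yields
\begin{align*}
\hat L(x^k,y^k,p^k,x^{k-1})-\hat L(x^{k+1},y^{k+1},p^{k+1},x^k)\ge\Big(\frac{\mu_1}{2}-\frac{2(\ell_f+\ell_\phi)^2}{\mu_0\alpha}-\frac{\sigma_0}{2}\Big)\|x^{k+1}-x^k\|^2,
\end{align*}
and the condition $\alpha>\frac{4((\ell_f+\ell_\phi)^2+\ell_\phi^2)}{\mu_1\mu_0}$ guarantees the bracketed coefficient is a positive constant $\sigma_1>0$, which is the claim. The remaining routine work is verifying the Lipschitz/strong-convexity manipulations and checking the arithmetic on the constants.
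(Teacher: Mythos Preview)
Your proposal is correct and follows essentially the same route as the paper: derive the bound $\|p^{k+1}-p^k\|^2\le\frac{2}{\mu_0}\big((\ell_f+\ell_\phi)^2\|x^{k+1}-x^k\|^2+\ell_\phi^2\|x^k-x^{k-1}\|^2\big)$ from the $x$-optimality condition and $AA^\top\succeq\mu_0 I$, combine it with the primal descent plus dual ascent identity, then absorb the lagging $\|x^k-x^{k-1}\|^2$ term into the auxiliary proximal correction to obtain $\sigma_1=\frac{\mu_1}{2}-\frac{2(\ell_f+\ell_\phi)^2}{\alpha\mu_0}-\frac{2\ell_\phi^2}{\alpha\mu_0}>0$. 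One small remark: the injectivity of $B$ plays no role in this lemma (it enters only later, in Lemma~\ref{E2}), and your value of $\sigma_0$ is off by a factor of two relative to the definition $\hat L=L_\alpha+\frac{\sigma_0}{2}\|x-\hat x\|^2$ --- but this is harmless bookkeeping you already flagged.
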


\begin{proof}
First we show that for each $k\in\mathbb{N}$
\begin{align}\label{D1}
\|p^{k+1}-p^{k}\|^2&\le\frac{2(\ell_{f}+\ell_{\phi})^2}{\mu_0}
\|x^{k+1}-x^{k}\|^2\nonumber\\
&\quad+\frac{2\ell_{\phi}^2}{\mu_0}\|x^{k}-x^{k-1}\|^2.
\end{align}
Indeed applying Fermat's rule to \eqref{A2} yields
\begin{align}\label{D2}
\begin{split}
 \nabla f(x^{k+1})+A^{\top}p^k+\alpha A^{\top}(Ax^{k+1}-By^{k+1})\\
 +\nabla\phi(x^{k+1})-\nabla\phi(x^k)=0,
\end{split}
\end{align}
which together with \eqref{A4} implies that
\begin{align}\label{A1}
\begin{split}
 A^{\top}p^{k+1}&=A^{\top}(p^{k}+\alpha (Ax^{k+1}-By^{k+1}))\\
&=-\nabla f(x^{k+1})+\nabla\phi(x^k)-\nabla\phi(x^{k+1}).
\end{split}
\end{align}
It then follows that
\begin{align*}
&\quad \ \|A^{\top}(p^{k+1}-p^{k})\|^2\\
&=\|\nabla f(x^{k+1})-\nabla f(x^{k}) +(\nabla\phi(x^{k+1})\\
&\quad-\nabla\phi(x^k))+(\nabla\phi(x^{k-1})-\nabla\phi(x^k))\|^2\\
&\le\big(\|\nabla f(x^{k+1})-\nabla f(x^{k})\| +\|\nabla\phi(x^{k+1})\\
&\quad-\nabla\phi(x^k)\|+\|\nabla\phi(x^{k-1})-\nabla\phi(x^k)\|\big)^2\\
&\le\big(\ell_{f}\|x^{k+1}-x^{k}\|+\ell_{\phi}\|x^{k}-x^{k+1}\|\\
&\quad+\ell_{\phi}\|x^{k}-x^{k-1}\|\big)^2\\
&\le2(\ell_{f}+\ell_{\phi})^2\|x^{k+1}-x^{k}\|^2\\
&\quad+2\ell_{\phi}^2\|x^{k}-x^{k-1}\|^2.
\end{align*}
Since matrix $A$ is surjective, we have
\begin{align*}
\|A^{\top}(p^{k+1}-p^{k})\|^2&=\langle A^{\top}(p^{k+1}-p^{k}), A^{\top}(p^{k+1}-p^{k})\rangle\\
&=\langle AA^{\top}(p^{k+1}-p^{k}), p^{k+1}-p^{k}\rangle\\
&\ge\mu_0 \|p^{k+1}-p^{k}\|^2,
\end{align*}
which at once implies \eqref{D1}, as desired.

Next we claim that
\begin{align}\label{D4}
\begin{split}
&L_{\alpha}(x^{k+1},y^{k+1},p^{k+1})-
L_{\alpha}(x^{k},y^{k},p^{k})\\
&\le-\frac{\mu_1}{2}\|x^{k+1}-x^k\|^2
+\frac1\alpha\|p^{k+1}-p^{k}\|^2.
\end{split}
\end{align}
To see this, we deduce  from \eqref{B2} and \eqref{A3}-\eqref{A4}
that
\begin{align*}
&L_{\alpha}(x^{k},y^{k+1},p^{k})\le L_{\alpha}(x^{k},y^{k},p^{k}),\\
&L_{\alpha}(x^{k+1},y^{k+1},p^{k})\le L_{\alpha}(x^{k},y^{k+1},p^{k})\\
&\qquad\qquad\qquad\qquad-\frac{\mu_1}{2}\|x^{k+1}-x^k\|^2, \\
&L_{\alpha}(x^{k+1},y^{k+1},p^{k+1})-L_{\alpha}(x^{k+1},y^{k+1},p^{k})\\
&=\langle p^{k+1}-p^{k}, Ax^{k+1}-By^{k+1}\rangle\\
&=\frac1\alpha\|p^{k+1}-p^{k}\|^2.
\end{align*}
Adding up the above formulas at once yields \eqref{D4}.

Finally it follows from \eqref{D1} and \eqref{D4} that
\begin{align*}
&L_{\alpha}(x^{k+1},y^{k+1},p^{k+1})-L_{\alpha}(x^{k},y^{k},p^{k})\\
&\le\left(\frac{2(\ell_{f}+\ell_{\phi})^2}{\alpha\mu_0}-\frac{\mu_1}{2}\right)
\|x^{k+1}-x^{k}\|^2\\
&\quad+\frac{2\ell_{\phi}^2}{\alpha\mu_0}\|x^{k}-x^{k-1}\|^2,
\end{align*}
which is equivalent to
\begin{align*}
&L_{\alpha}(x^{k+1},y^{k+1},p^{k+1})+\frac{2\ell_{\phi}^2}{\alpha\mu_0}\|x^{k+1}-x^k\|^2\\
&\le L_{\alpha}(x^{k},y^{k},p^{k})+\frac{2\ell_{\phi}^2}{\alpha\mu_0}\|x^{k}-x^{k-1}\|^2\\
&\quad-\left(\frac{\mu_1}{2}-\frac{2(\ell_{f}+\ell_{\phi})^2}{\alpha\mu_0}-
  \frac{2\ell_{\phi}^2}{\alpha\mu_0}\right)\|x^{k}-x^{k+1}\|^2.
\end{align*}
Let  us now define
\begin{align*}
&\sigma_0=\frac{2\ell_{\phi}^2}{\alpha\mu_0}, \ \sigma_1=
\left(\frac{\mu_1}{2}-\frac{2(\ell_{f}+\ell_{\phi})^2}{\alpha\mu_0}-
  \frac{2\ell_{\phi}^2}{\alpha\mu_0}\right).
\end{align*}
Clearly both $\sigma_i$ are positive and thus the desired inequality
follows.
\end{proof}

\begin{lemma}\label{E2}
If the sequence $z^k:=(x^k,y^k,p^k)$ is bounded, then
$$\sum_{k=0}^{\infty}\|z^k-z^{k+1}\|^2<\infty.$$
In particular the sequence $\|z^k-z^{k+1}\|$ is asymptotically
regular, namely $\|z^k-z^{k+1}\|\to0$ as $k\to\infty.$ Moreover any
cluster point
 of $z^k$ is a stationary point of $L_{\alpha}.$
\end{lemma}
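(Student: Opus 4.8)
The plan is to establish the three assertions in sequence, exploiting the monotone descent of the auxiliary function $\hat L$ from Lemma~\ref{E1} as the engine. First I would show the summability $\sum_k \|x^{k+1}-x^k\|^2<\infty$. Since $z^k$ is bounded, so is the sequence $(x^k,y^k,p^k,x^{k-1})$, and because $f+g$ is sub-analytic (hence continuous on the relevant bounded region) while the penalty and Bregman terms are continuous, the values $\hat L(x^{k+1},y^{k+1},p^{k+1},x^k)$ form a bounded-below sequence. Lemma~\ref{E1} gives $\hat L(x^{k},y^{k},p^{k},x^{k-1})-\hat L(x^{k+1},y^{k+1},p^{k+1},x^k)\ge\sigma_1\|x^{k+1}-x^k\|^2\ge0$, so $\hat L$ along the iterates is nonincreasing and bounded below, hence convergent; telescoping the inequality over $k=0,\dots,N$ yields $\sigma_1\sum_{k=0}^{N}\|x^{k+1}-x^k\|^2\le \hat L(x^0,y^0,p^0,x^{-1})-\inf_k\hat L<\infty$, and letting $N\to\infty$ gives $\sum_k\|x^{k+1}-x^k\|^2<\infty$.

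Next I would promote this to the full vector $z^k$. From \eqref{D1} in the proof of Lemma~\ref{E1} we have $\|p^{k+1}-p^k\|^2\le \tfrac{2(\ell_f+\ell_\phi)^2}{\mu_0}\|x^{k+1}-x^k\|^2+\tfrac{2\ell_\phi^2}{\mu_0}\|x^k-x^{k-1}\|^2$, so $\sum_k\|p^{k+1}-p^k\|^2<\infty$ follows from the $x$-summability. For the $y$-increments I would use the update relation: from \eqref{A4}, $By^{k+1}=Ax^{k+1}-\alpha^{-1}(p^{k+1}-p^k)$, hence $B(y^{k+1}-y^k)=A(x^{k+1}-x^k)-\alpha^{-1}(p^{k+1}-p^k)+\alpha^{-1}(p^k-p^{k-1})$; since $B$ is injective, $\|B(y^{k+1}-y^k)\|\ge c\|y^{k+1}-y^k\|$ for some $c>0$ (smallest singular value of $B$), so $\|y^{k+1}-y^k\|^2$ is bounded by a constant times $\|x^{k+1}-x^k\|^2+\|p^{k+1}-p^k\|^2+\|p^k-p^{k-1}\|^2$, giving $\sum_k\|y^{k+1}-y^k\|^2<\infty$. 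Combining the three, $\sum_k\|z^{k+1}-z^k\|^2<\infty$, and asymptotic regularity $\|z^{k+1}-z^k\|\to 0$ is immediate.

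Finally, for the characterization of cluster points, let $z^*=(x^*,y^*,p^*)$ be the limit of a subsequence $z^{k_j}$. Using asymptotic regularity we also have $x^{k_j-1}\to x^*$, $p^{k_j+1}\to p^*$, etc. Passing to the limit in the three optimality conditions: from \eqref{A1}, $A^\top p^{k+1}=-\nabla f(x^{k+1})+\nabla\phi(x^k)-\nabla\phi(x^{k+1})$, and continuity of $\nabla f,\nabla\phi$ together with $x^{k_j-1},x^{k_j}\to x^*$ forces $A^\top p^*=-\nabla f(x^*)$. The feasibility relation $Ax^{k+1}-By^{k+1}=\alpha^{-1}(p^{k+1}-p^k)\to 0$ yields $Ax^*=By^*$. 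For the $y$-condition I would apply Fermat's rule to \eqref{A3}: $0\in\partial g(y^{k+1})-B^\top p^k+\alpha B^\top(By^{k+1}-Ax^k)+\nabla\psi(y^{k+1})-\nabla\psi(y^k)$, i.e. $\xi^{k+1}:=B^\top p^k-\alpha B^\top(By^{k+1}-Ax^k)-\nabla\psi(y^{k+1})+\nabla\psi(y^k)\in\partial g(y^{k+1})$; the subtlety is that $g$ is only lower semicontinuous, so to use the closedness property of $\partial g$ I must check $g(y^{k_j})\to g(y^*)$. This I would get from the defining minimality of $y^{k_j}$ in \eqref{A3} by comparing its objective value to that at $y^*$ (a standard $\limsup g(y^{k_j})\le g(y^*)$ argument) combined with lower semicontinuity for the reverse inequality. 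Then $\xi^{k_j}\to B^\top p^*$ (using $By^*=Ax^*$ and asymptotic regularity to kill the $\alpha$-term and the $\nabla\psi$-difference) shows $B^\top p^*\in\partial g(y^*)$, so $z^*$ satisfies Definition~2 and is a stationary point of $L_\alpha$.

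The main obstacle I anticipate is precisely this last point: verifying $g(y^{k_j})\to g(y^*)$ so that the limiting subdifferential can be invoked, since for merely lsc $g$ the naive limit argument only gives one inequality; the minimality of the proximal-type $y$-update is what supplies the matching $\limsup$ bound, and care is needed that the Bregman term $\triangle_\psi(y,y^k)$ stays controlled (here $\|y^{k+1}-y^k\|\to 0$ and Lipschitzness of $\nabla\psi$ make $\triangle_\psi(y^{k_j},y^{k_j-1})\to 0$, which is what makes the comparison work).
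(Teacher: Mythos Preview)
Your approach matches the paper's: descent of $\hat L$ from Lemma~\ref{E1} plus telescoping yields $\sum_k\|x^{k+1}-x^k\|^2<\infty$, then \eqref{D1} and the injectivity of $B$ (via the identity $\alpha B(y^{k+1}-y^k)=\alpha A(x^{k+1}-x^k)-(p^{k+1}-p^k)+(p^k-p^{k-1})$, which is the paper's \eqref{B3}) handle the $p$- and $y$-increments, and asymptotic regularity lets one pass to the limit in the first-order conditions. One minor correction: sub-analyticity of $f+g$ does \emph{not} imply continuity, so for the ``bounded below'' step you should instead invoke lower semicontinuity of $\hat L$ along a convergent subsequence of the bounded $\hat z^k$ (exactly as the paper does); conversely, your explicit $\limsup$ argument from the $y$-subproblem minimality to get $g(y^{k_j})\to g(y^*)$ is more careful than the paper's one-line ``it is not hard to see that $g(y^k)$ is also convergent.''
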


\begin{proof}
Let $\hat{z}^k:=(x^k,y^k,p^k,x^{k-1}).$ Since $\hat{z}^k$ is clearly
bounded, there exists a subsequence $\hat{z}^{k_j}$ so that it is
convergent to some element $\hat{z}^{*}$. By our hypothesis the
function $\hat{L}$ is lower semicontinuous, which leads to
$$\liminf_{j\to\infty}\hat{L}(\hat{z}^{k_j})\ge \hat{L}(\hat{z}^*),$$
so that $\hat{L}(\hat{z}^{k_j})$ is bounded from below. By the
previous lemma, $\hat{L}(\hat{z}^{k})$ is nonincreasing, so that
$\hat{L}(\hat{z}^{k_j})$ is convergent. Moreover
$\hat{L}(\hat{z}^{k})$ is also convergent and
$\hat{L}(\hat{z}^{k})\ge
 \hat{L}(\hat{z}^{*})$ for each $k$.

Now fix $k\in\mathbb{N}.$ It then follows from Lemma \ref{E1} that
\begin{align*}
&\sigma_1\sum_{i=0}^{k}\|x^i-x^{i+1}\|^2\\
&\le\sum_{i=0}^{k} \hat{L}(\hat{z}^i)-\hat{L}(\hat{z}^{i+1})\\
&=\hat{L}(\hat{z}^{0})-\hat{L}(\hat{z}^{k+1})\\
&\le\hat{L}(\hat{z}^{0})-\hat{L}(\hat{z}^{*})<\infty.
\end{align*}
Since $k$ is chosen arbitrarily, we have
$\sum_{k=0}^{\infty}\|x^k-x^{k+1}\|^2<\infty,$ which with \eqref{D1}
implies $\sum_{k=0}^{\infty}\|p^k-p^{k+1}\|^2<\infty$. Since $B$ is
injective, it is readily seen that there exists $\mu_B>0$ so that
\begin{align}\label{B3}
&\quad \ \alpha^2 \mu_B\|y^k-y^{k+1}\|^2\nonumber\\
&\le\|\alpha B(y^k-y^{k+1})\|^2 \nonumber\\
&=\|(p^k-p^{k+1})+(p^k-p^{k-1})\nonumber\\
&\quad+\alpha( Ax^{k+1}-Ax^{k})\|^2 \nonumber\\
&\le2(\|p^k-p^{k+1}\|^2+\|p^k-p^{k-1}\|^2\nonumber\\
&\quad+\alpha^2\|A\|^2\|x^{k+1}-x^{k}\|^2).
\end{align}
Hence $\sum_{k=0}^{\infty}\|y^k-y^{k+1}\|^2<\infty$, so that
$\sum_{k=0}^{\infty}\|z^k-z^{k+1}\|^2<\infty;$ in particular
$\|z^k-z^{k+1}\|\to0$.

Let $z^*=(x^*,y^*,p^*)$ be any cluster point of $z^k$ and let
$z^{k_j}$ be a subsequence of $z^k$  converging to $z^*$. Since
$\|z^k-z^{k+1}\|$ tends to zero as $k\to\infty,$ $z^{k_j}$ and
$z^{k_j+1}$ have the same limit point $z^*$. Since
$\hat{L}(\hat{z}^{k})$ is convergent, it is not hard to see that
$g(y^{k})$ is also convergent.
 It then  follows from \eqref{A3}-\eqref{A4}  that
\begin{align*}
p^{k+1}&=p^{k}+\alpha (Ax^{k+1}-By^{k+1}),\\
-\nabla f(x^{k+1})&  = A^{\top}p^{k+1}+\nabla\phi(x^{k+1})-\nabla\phi(x^k),\\
\partial g(y^{k+1})&\ni B^{\top}p^{k}+\alpha B^{\top}(Ax^{k}-By^{k+1})\\
&\quad+\nabla\psi(y^{k})-\nabla\psi(y^{k+1})\\
&= B^{\top}p^{k+1}+\alpha
B^{\top}(Ax^{k}-Ax^{k+1})\\
&\quad+\nabla\psi(y^{k})-\nabla\psi(y^{k+1}).
\end{align*}
Letting $j\to\infty$ in the above formulas yields
\begin{align*}
A^{\top}p^*=-\nabla f(x^*), B^{\top}p^*\in \partial g(y^{*}),
Ax^*=By^*,
\end{align*}
which implies that $z^*$ is a stationary point.
\end{proof}

\begin{lemma}\label{E3}
Let $\hat{z}^{k+1}:=(x^{k+1},y^{k+1},p^{k+1},x^{k})$. Then there
exists $\kappa>0$ such that for each $k$
\begin{align*}
\mathrm{dist}(0,\partial
\hat{L}(\hat{z}^{k+1}))&\le\kappa(\|x^k-x^{k+1}\|+\|x^k-x^{k-1}\|\\
&\quad+\|x^{k-1}-x^{k-2}\|).
\end{align*}
\end{lemma}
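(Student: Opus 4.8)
The plan is to compute the subdifferential of $\hat{L}$ at $\hat{z}^{k+1} = (x^{k+1}, y^{k+1}, p^{k+1}, x^k)$ componentwise, using the fact that $\hat{L}(x,y,p,\hat{x}) = L_\alpha(x,y,p) + \frac{\sigma_0}{2}\|x-\hat{x}\|^2$ is smooth in $x$, $p$, $\hat{x}$ and has the nonsmooth term $g(y)$ in the $y$-slot only. Thus $\partial \hat{L}(\hat{z}^{k+1})$ is the product of four sets: the $x$-gradient $\nabla f(x^{k+1}) + A^\top p^{k+1} + \alpha A^\top(Ax^{k+1}-By^{k+1}) + \sigma_0(x^{k+1}-x^k)$; the set $\partial g(y^{k+1}) - B^\top p^{k+1} - \alpha B^\top(Ax^{k+1}-By^{k+1})$; the $p$-gradient $Ax^{k+1}-By^{k+1}$; and the $\hat{x}$-gradient $\sigma_0(x^k-x^{k+1})$. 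To produce an upper bound on $\mathrm{dist}(0,\partial\hat{L}(\hat{z}^{k+1}))$, it suffices to exhibit one element in each slot that is small and bound its norm.

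First I would handle the $x$-slot: by the Fermat rule \eqref{D2} for the $x$-subproblem, $\nabla f(x^{k+1}) + A^\top p^k + \alpha A^\top(Ax^{k+1}-By^{k+1}) = \nabla\phi(x^k) - \nabla\phi(x^{k+1})$, so after adding $A^\top p^{k+1} - A^\top p^k$ and the extra term $\sigma_0(x^{k+1}-x^k)$, the chosen $x$-component equals $A^\top(p^{k+1}-p^k) + \nabla\phi(x^k)-\nabla\phi(x^{k+1}) + \sigma_0(x^{k+1}-x^k)$; its norm is bounded by $\|A\|\,\|p^{k+1}-p^k\| + \ell_\phi\|x^{k+1}-x^k\| + \sigma_0\|x^{k+1}-x^k\|$, and invoking \eqref{D1} to control $\|p^{k+1}-p^k\|$ by $\|x^{k+1}-x^k\|$ and $\|x^k-x^{k-1}\|$ puts this into the desired form. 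For the $p$-slot, $Ax^{k+1}-By^{k+1} = \frac1\alpha(p^{k+1}-p^k)$ by \eqref{A4}, again bounded via \eqref{D1}. The $\hat{x}$-slot is exactly $\sigma_0(x^k-x^{k+1})$, which is already of the required shape. The $y$-slot is the one requiring the optimality condition of the $y$-subproblem \eqref{A3}: Fermat's rule there gives an element of $\partial g(y^{k+1})$ equal to $B^\top p^k + \alpha B^\top(Ax^k - By^{k+1}) + \nabla\psi(y^k) - \nabla\psi(y^{k+1})$, exactly as already derived in the proof of Lemma~\ref{E2}; subtracting $B^\top p^{k+1} + \alpha B^\top(Ax^{k+1}-By^{k+1})$ from this leaves $\alpha B^\top A(x^k - x^{k+1}) + \nabla\psi(y^k) - \nabla\psi(y^{k+1})$, whose norm is bounded by $\alpha\|B\|\|A\|\,\|x^k-x^{k+1}\| + \ell_\psi\|y^k - y^{k+1}\|$.

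The main obstacle is the $\|y^k - y^{k+1}\|$ term appearing in the $y$-slot bound, since the statement is phrased purely in terms of consecutive $x$-differences. The fix is to re-run the estimate \eqref{B3} from Lemma~\ref{E2}, which shows $\alpha^2\mu_B\|y^k-y^{k+1}\|^2 \le 2(\|p^k-p^{k+1}\|^2 + \|p^{k-1}-p^k\|^2 + \alpha^2\|A\|^2\|x^{k+1}-x^k\|^2)$; applying \eqref{D1} twice (once at index $k$, once at index $k-1$) converts the two $\|p\|$-differences into the $x$-differences $\|x^{k+1}-x^k\|$, $\|x^k-x^{k-1}\|$, $\|x^{k-1}-x^{k-2}\|$, which is precisely why the bound in Lemma~\ref{E3} reaches three steps back. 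Collecting all four slot bounds, taking $\kappa$ to be a sufficiently large constant depending on $\alpha$, $\|A\|$, $\|B\|$, $\ell_f$, $\ell_\phi$, $\ell_\psi$, $\mu_0$, $\mu_B$, $\sigma_0$, and using $\sqrt{a^2+b^2+c^2+d^2} \le a+b+c+d$ for nonnegative reals yields the claimed inequality. The only subtlety to state carefully is that all of this is valid for $k \ge 2$ (so that $x^{k-2}$ is defined); for the initial indices one absorbs the finitely many extra terms into $\kappa$ or adjusts the indexing convention.
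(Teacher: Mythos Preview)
Your approach is essentially identical to the paper's: compute the four partial subdifferentials of $\hat{L}$ at $\hat{z}^{k+1}$, use the optimality conditions of the $x$- and $y$-subproblems to rewrite them, bound everything by $\|x^{k+1}-x^k\|$, $\|y^{k+1}-y^k\|$, $\|p^{k+1}-p^k\|$, and then invoke \eqref{D1} and \eqref{B3} to reduce to $x$-differences. One minor arithmetic slip: in the $y$-slot, subtracting $B^\top p^{k+1}+\alpha B^\top(Ax^{k+1}-By^{k+1})$ from your element of $\partial g(y^{k+1})$ leaves an additional $B^\top(p^k-p^{k+1})$ term (exactly as in the paper's computation), but this is harmless since it is bounded the same way via \eqref{D1} and simply absorbed into $\kappa$.
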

\begin{proof}
By the definitions of $\hat{L}$ and algorithm \eqref{A3}-\eqref{A4},
we have
\begin{align*}
 \partial\hat{L}_x(\hat{z}^{k+1})&=\nabla f(x^{k+1})+A^{\top}p^{k+1}+\sigma_0(x^{k+1}-x^k)\\
 &\quad+\alpha A^{\top}(Ax^{k+1}-By^{k+1})\\
 &=\nabla\phi(x^k)-\nabla\phi(x^{k+1})+\sigma_0(x^{k+1}-x^k)\\
 &\quad+\alpha A^{\top}(Ax^{k+1}-By^{k+1})\\
  &=\nabla\phi(x^k)-\nabla\phi(x^{k+1})+\sigma_0(x^{k+1}-x^k)\\
 &\quad+A^{\top}(p^{k+1}-p^k),
\end{align*}
where the last equality follows from \eqref{A4}. On the other hand,
it follows from \eqref{A3} that
\begin{align*}
0&\in \partial g(y^{k+1})-B^{\top}p^{k}-\alpha B^{\top}(Ax^{k}-By^{k+1})\\
 &\quad+\nabla\psi(y^{k+1})-\nabla\psi(y^{k})\\
 &= \partial g(y^{k+1})-B^{\top}p^{k+1}-\alpha B^{\top}(Ax^{k}-Ax^{k+1})\\
 &\quad+\nabla\psi(y^{k+1})-\nabla\psi(y^{k}),
\end{align*}
which implies
\begin{align*}
 &\quad \ \partial\hat{L}_y(\hat{z}^{k+1})\\
 &=\partial g(y^{k+1})-B^{\top}p^{k+1}+\alpha B^{\top}(By^{k+1}-Ax^{k+1})\\
 &\ni\nabla\psi(y^{k})-\nabla\psi(y^{k+1})+\alpha B^{\top}(Ax^{k}-Ax^{k+1})\\
 &\quad-\alpha B^{\top}(Ax^{k+1}-By^{k+1})\\
  &= \nabla\psi(y^{k})-\nabla\psi(y^{k+1})+\alpha B^{\top}(Ax^{k}-Ax^{k+1})\\
 &\quad+ B^{\top}(p^k-p^{k+1}).
\end{align*}
Also it is clear that
$\partial\hat{L}_{\hat{x}}(\hat{z}^{k+1})=-\sigma_0(x^{k+1}-x^k)$
and
\begin{align*}
  \partial\hat{L}_p(\hat{z}^{k+1})=Ax^{k+1}-By^{k+1}=\frac1\alpha(p^{k+1}-p^k).
\end{align*}
Consequently, there exists $\kappa_0>0$ so that
\begin{align*}
\mathrm{dist}(0,\partial \hat{L}(\hat{z}^{k+1}))\le
\kappa_0(\|x^k-x^{k+1}\|+ \|y^{k+1}-y^k\|+\|p^{k+1}-p^k\|).
\end{align*}
On the other hand, it follows from \eqref{D1} that
\begin{align}\label{D6}
\|p^{k+1}-p^{k}\|&\le\bigg[\frac{2(\ell_{f}+\ell_{\phi})^2}{\mu_0}
\|x^{k+1}-x^{k}\|^2 \nonumber\\
&\quad+\frac{2\ell_{\phi}^2}{\mu_0}\|x^{k}-x^{k-1}\|^2\bigg]^{1/2}\nonumber\\
&\le\frac{\sqrt{2}(\ell_{f}+\ell_{\phi})}{\sqrt{\mu_0}}
\|x^{k+1}-x^{k}\|\nonumber\\
&\quad +\frac{\sqrt{2}\ell_{\phi}}{\sqrt{\mu_0}}\|x^{k}-x^{k-1}\|\nonumber\\
&\le\frac{\sqrt{2}(\ell_{f}+\ell_{\phi})}{\sqrt{\mu_0}}
(\|x^{k+1}-x^{k}\|+\|x^{k}-x^{k-1}\|)\nonumber\\
&=\kappa_1(\|x^{k+1}-x^{k}\|+\|x^{k}-x^{k-1}\|),
\end{align}
where we have defined
$\kappa_1:=\sqrt{2}(\ell_{f}+\ell_{\phi})/\sqrt{\mu_0}.$ Furthermore,
it follows from \eqref{B3} that
\begin{align}
\|y^k-y^{k+1}\|&\le\frac{\sqrt{2}}{\alpha\sqrt{\mu_B}}(\|p^k-p^{k+1}\|^2+\|p^k-p^{k-1}\|^2\nonumber\\
&\quad +\alpha^2\|A\|^2\|x^{k+1}-x^{k}\|^2)^{1/2}\nonumber\\
&\le\frac{\sqrt{2}}{\alpha\sqrt{\mu_B}}(\|p^k-p^{k+1}\|+\|p^k-p^{k-1}\|\nonumber\\
&\quad+\alpha\|A\|\|x^{k+1}-x^{k}\|)\nonumber\\
&\le\frac{\sqrt{2}}{\alpha\sqrt{\mu_B}}((\kappa_1+\alpha\|A\|)\|x^k-x^{k+1}\|\nonumber\\
&\quad  +2\kappa_1\|x^k-x^{k-1}\|+\kappa_1\|x^{k-1}-x^{k-2}\|)\nonumber\\
&=\kappa_2(\|x^k-x^{k+1}\|+\|x^k-x^{k-1}\|\nonumber\\
&\quad+\|x^{k-1}-x^{k-2}\|)\label{D7},
\end{align}
where we have defined
$\kappa_2:=\sqrt{2}(2\kappa_1+\alpha\|A\|)/\alpha\sqrt{\mu_B}.$
Hence, with $\kappa:=\kappa_0(\kappa_1+\kappa_2)$, we immediately
obtain the inequality as desired.
\end{proof}


\begin{theorem}\label{T1}
Let Assumption \ref{j1} be fulfilled. If $z^k:=(x^k,y^k,p^k)$ is
bounded, then
$$\sum_{k=0}^{\infty}\|z^k-z^{k+1}\|<\infty.$$
Moreover  the sequence $(z^k)$ converges to a stationary point of
problem \eqref{p1}.
\end{theorem}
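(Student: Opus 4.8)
\emph{Proof proposal.} The plan is to run the by-now standard Kurdyka--{\L}ojasiewicz argument on the auxiliary function $\hat{L}$, using the sufficient-decrease estimate of Lemma~\ref{E1}, the subgradient bound of Lemma~\ref{E3}, and the uniformized K--L inequality of Lemma~\ref{KL}; the only genuinely new bookkeeping comes from the three-step ``memory'' appearing on the right-hand side of Lemma~\ref{E3}. First I would check that $\hat{L}(x,y,p,\hat{x})=L_\alpha(x,y,p)+\frac{\sigma_0}{2}\|x-\hat{x}\|^2$ is sub-analytic: by Assumption~\ref{j1}(c) the map $(x,y)\mapsto f(x)+g(y)$ is sub-analytic, while the remaining terms $\langle p,Ax-By\rangle+\frac\alpha2\|Ax-By\|^2+\frac{\sigma_0}{2}\|x-\hat{x}\|^2$ form a real polynomial, hence analytic; since the sum of a sub-analytic function and an analytic function is sub-analytic, $\hat{L}$ satisfies the K--L inequality at each of its points. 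Next, let $\Omega$ denote the set of cluster points of $\hat{z}^k:=(x^k,y^k,p^k,x^{k-1})$. Because $z^k$ is bounded and $\|z^k-z^{k+1}\|\to0$ by Lemma~\ref{E2}, the sequence $\hat{z}^k$ is bounded, so $\Omega$ is nonempty and compact; moreover $\hat{L}(\hat{z}^k)$ is nonincreasing (Lemma~\ref{E1}) and, being bounded below along a convergent subsequence, convergent to some value $L^*$, and using lower semicontinuity of $\hat{L}$ together with $\|\hat{z}^k-\hat{z}^{k+1}\|\to0$ and the convergence of $g(y^k)$ recorded in the proof of Lemma~\ref{E2} one verifies that $\hat{L}\equiv L^*$ on $\Omega$. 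Applying Lemma~\ref{KL} to $\hat{L}$ on $\Omega$ then furnishes uniform constants $\eta,\delta>0$ and $\varphi\in\mathscr{A}_{\eta}$.

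If $\hat{L}(\hat{z}^k)=L^*$ for some $k$, then by monotonicity and Lemma~\ref{E1} the iterates are eventually constant and the conclusion is trivial; so assume $\hat{L}(\hat{z}^k)>L^*$ for every $k$. For all large $k$ we have $\mathrm{dist}(\hat{z}^k,\Omega)<\delta$ and $L^*<\hat{L}(\hat{z}^k)<L^*+\eta$, so the K--L inequality at $\hat{z}^{k+1}$ combined with Lemma~\ref{E3} gives
\begin{align*}
\varphi'\bigl(\hat{L}(\hat{z}^{k+1})-L^*\bigr)\ge\frac{1}{\kappa\bigl(\|x^k-x^{k+1}\|+\|x^k-x^{k-1}\|+\|x^{k-1}-x^{k-2}\|\bigr)}.
\end{align*}
Setting $\Delta_k:=\varphi(\hat{L}(\hat{z}^{k})-L^*)-\varphi(\hat{L}(\hat{z}^{k+1})-L^*)\ge0$, concavity of $\varphi$ yields $\Delta_{k+1}\ge\varphi'(\hat{L}(\hat{z}^{k+1})-L^*)\bigl(\hat{L}(\hat{z}^{k+1})-\hat{L}(\hat{z}^{k+2})\bigr)$, and the decrease estimate $\hat{L}(\hat{z}^{k+1})-\hat{L}(\hat{z}^{k+2})\ge\sigma_1\|x^{k+1}-x^{k+2}\|^2$ of Lemma~\ref{E1} then produces
\begin{align*}
\|x^{k+1}-x^{k+2}\|^2\le\frac{\kappa}{\sigma_1}\,\Delta_{k+1}\bigl(\|x^k-x^{k+1}\|+\|x^k-x^{k-1}\|+\|x^{k-1}-x^{k-2}\|\bigr).
\end{align*}

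Taking square roots and applying $2\sqrt{ab}\le\varrho^{-1}a+\varrho b$ with $\varrho$ chosen so large that $3/(2\varrho)<1$, I would then sum over $k$ from some $k_0$ onward: the $\Delta_{k+1}$ telescope to $\varphi(\hat{L}(\hat{z}^{k_0})-L^*)<\infty$ (using $\varphi\ge0$), while the shifted first-difference terms on the right are absorbed into the left-hand sum because the accumulated coefficient $3/(2\varrho)$ is strictly less than $1$; the finitely many initial terms are handled separately. This yields $\sum_k\|x^k-x^{k+1}\|<\infty$, and substituting this into \eqref{D6} gives $\sum_k\|p^k-p^{k+1}\|<\infty$ and then into \eqref{D7} gives $\sum_k\|y^k-y^{k+1}\|<\infty$, so that $\sum_k\|z^k-z^{k+1}\|<\infty$. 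Consequently $(z^k)$ is Cauchy and hence convergent, and by Lemma~\ref{E2} its limit is a stationary point of $L_\alpha$, i.e.\ of problem~\eqref{p1}. The main obstacle I anticipate is precisely this absorption step: because Lemma~\ref{E3} bounds the subgradient by three consecutive first differences rather than one, the Young-type splitting and the index shifts must be arranged so that the ``memory'' terms still leave a summable tail; choosing $\varrho$ correctly and treating the finitely many early indices carefully is where the argument needs attention. A secondary point requiring care is the verification that $\hat{L}$ is constant on $\Omega$, which quietly uses both the continuity of the smooth part of $L_\alpha$ and the convergence of $g(y^k)$.
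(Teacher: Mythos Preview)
Your proposal is correct and follows essentially the same route as the paper: establish sub-analyticity of $\hat{L}$, show $\hat{L}$ is constant on the (compact) cluster set $\Omega$, apply the uniformized K--L inequality together with Lemmas~\ref{E1} and~\ref{E3}, and then run the telescoping argument to obtain $\sum_k\|x^k-x^{k+1}\|<\infty$, from which the summability of $\|p^k-p^{k+1}\|$ and $\|y^k-y^{k+1}\|$ follows via \eqref{D6}--\eqref{D7}. The only cosmetic differences are that the paper applies the K--L inequality at $\hat{z}^k$ rather than $\hat{z}^{k+1}$ and fixes the Young parameter to $\varrho=2$ (yielding the ``multiply by $4$'' trick $4\|x^k-x^{k+1}\|\le(\text{three shifted terms})+4\tfrac{\kappa}{\sigma_1}\Delta_k$), whereas you leave $\varrho>3/2$ free; both choices make the absorption step go through.
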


\begin{proof}
Let $\hat{z}^{k+1}=(x^{k+1},y^{k+1},p^{k+1},x^{k})$ and let $\Omega$
denote the cluster point set of $\hat{z}^k$. By Lemma \ref{E2}, the
sequence $x^k$ is asymptotically  regular, then the sequence $x^k$
and $x^{k+1}$ share the the same cluster points. Hence we can take
$\hat{z}^*:=(x^{*},y^{*},q^{*},x^{*})\in\Omega$
 and let $\hat{z}^{k_j}$ be a subsequence
of $\hat{z}^k$  converging to $\hat{z}^*$. By our hypothesis on $g$,
we have that
 $\hat{L}(\hat{z}^{k_j})\to\hat{L}(\hat{z}^{*})$.
Since by Lemma \ref{E2} the sequence $\hat{L}(\hat{z}^{k})$ is
convergent, this implies that
$\hat{L}(\hat{z}^{k})\to\hat{L}(\hat{z}^{*});$ hence the function
$\hat{L}(\cdot)$ is a constant on $\Omega$.

Let us now consider two possible cases on $\hat{L}(\hat{z}^{k})$.
First assume that there exists $k_0\in\mathbb{N}$ such that
$\hat{L}_{k_0}=\hat{L}(\hat{z}^{*}).$ Then we  deduce from Lemma
\ref{E1} that for any $k>k_0$
 \begin{align*}
\sigma_1\|x^{k+1}-x^k\|^2&\le
\hat{L}(\hat{z}^{k})-\hat{L}(\hat{z}^{k+1})\\
&\le \hat{L}(\hat{z}^{k_0})-\hat{L}(\hat{z}^{*})=0,
 \end{align*}
where we have used the fact that $\hat{L}(\hat{z}^{k})$ is
nonincreasing. This together with \eqref{D6} and \eqref{D7} implies
that $z^k$ is a constant sequence except for some finite terms, and
thus it is a convergent sequence.

Let us now assume that $\hat{L}(\hat{z}^{k})>\hat{L}(\hat{z}^{*})$
for each $k\in\mathbb{N}$. By our hypothesis on $f$ and $g$, it is
clear that $\hat{L}(\cdot)$ is a sub-analytic function and thus
satisfies the K-L inequality. Thus by Lemma \ref{KL} there exists
$\eta>0, \delta>0, \varphi\in\mathscr{A}_{\eta}$, such that for all
$\hat{z}$ satisfying $\mathrm{dist}(\hat{z},\Omega)<\delta$ and
$\hat{L}(\hat{z}^{*})<\hat{L}(\hat{z})<\hat{L}(\hat{z}^{*})+\eta$,
 there holds the inequality
\begin{align*}
 \varphi'(\hat{L}(\hat{z})-\hat{L}(\hat{z}^{*}))\mathrm{dist}(0, \partial \hat{L}(\hat{z}))\ge1.
\end{align*}
By the definition of $\Omega$, we have that
$\lim_k\mathrm{dist}(\hat{z}^k, \Omega)=0.$ This together with the
fact that $\hat{L}(\hat{z}^{k})\to\hat{L}(\hat{z}^*)$ implies that
there exists $k_1\in\mathbb{N}$ such that $\mathrm{dist}(\hat{z}^k,
\Omega)<\delta$ and $\hat{L}(\hat{z}^{k})<\hat{L}(\hat{z}^*)+\eta$
for all $k\ge k_1.$

In what follows let us fix  $k> k_1.$  It then follows that
$$\hat{z}^k\in\{\hat{z} :\mathrm{dist}(\hat{z},\Omega)<\delta)\}\cap\{\hat{z} :
\hat{L}(\hat{z}^{*})<\hat{L}(\hat{z})<\hat{L}(\hat{z}^{*})+\eta\}.$$
Hence
$\mathrm{dist}(0,\partial\hat{L}(\hat{z}^k))\varphi\prime(\hat{L}(\hat{z}^{k})-\hat{L}(\hat{z}^*))\ge1,$
which with Lemma \ref{E3} yields
\begin{align*}
\frac{1}{\varphi\prime(\hat{L}(\hat{z}^{k})-\hat{L}(\hat{z}^*))}
\le\kappa(\|x^k-x^{k-1}\|+\|x^{k-1}-x^{k-2}\|
+\|x^{k-2}-x^{k-3}\|).
\end{align*}
By the concavity of $\varphi$, this further implies
\begin{align*}
&\quad \ \hat{L}(\hat{z}^{k})-\hat{L}(\hat{z}^{k+1})\\
&=(\hat{L}(\hat{z}^{k})-\hat{L}(\hat{z}^*))-(\hat{L}(\hat{z}^{k+1})-\hat{L}(\hat{z}^*))\\
&\le\frac{\varphi(\hat{L}(\hat{z}^{k})-\hat{L}(\hat{z}^*))-\varphi(\hat{L}(\hat{z}^{k+1})-\hat{L}(\hat{z}^*))}{\varphi\prime(\hat{L}(\hat{z}^{k})-\hat{L}(\hat{z}^*))}\\
&\le\kappa(\|x^k-x^{k-1}\|+\|x^{k-1}-x^{k-2}\|+\|x^{k-2}-x^{k-3}\|)\\
&\quad\times[\varphi(\hat{L}(\hat{z}^{k})-\hat{L}(\hat{z}^*))-\varphi(\hat{L}(\hat{z}^{k+1})-\hat{L}(\hat{z}^*))].
\end{align*}
Hence we deduce from Lemma \ref{E1} that
\begin{align*}
&\quad\|x^{k+1}-x^k\|^2\\
&\le\frac{\kappa}{\sigma_1}(\|x^k-x^{k-1}\|+\|x^{k-1}-x^{k-2}\|+\|x^{k-2}-x^{k-3}\|)\\
&\quad\times[\varphi(\hat{L}(\hat{z}^{k})-\hat{L}(\hat{z}^*))-\varphi(\hat{L}(\hat{z}^{k+1})-\hat{L}(\hat{z}^*))],
\end{align*}
which is equivalent to
\begin{align*}
&\quad4\|x^k-x^{k+1}\|\\
&\le2(\|x^k-x^{k-1}\|+\|x^{k-1}-x^{k-2}\|+\|x^{k-2}-x^{k-3}\|)^{1/2}\\
&\quad\times2\sqrt{\frac{\kappa}{\sigma_1}}[\varphi(\hat{L}(\hat{z}^{k})-\hat{L}(\hat{z}^*))-\varphi(\hat{L}(\hat{z}^{k+1})-\hat{L}(\hat{z}^*))]^{1/2}.
\end{align*}
On the other hand, using the inequality $2ab\le a^2+b^2$, we get
\begin{align*}
&\quad2(\|x^k-x^{k-1}\|+\|x^{k-1}-x^{k-2}\|+\|x^{k-2}-x^{k-3}\|)^{1/2}\\
&\quad\times2\sqrt{\frac{\kappa}{\sigma_1}}[\varphi(\hat{L}(\hat{z}^{k})-\hat{L}(\hat{z}^*))-\varphi(\hat{L}(\hat{z}^{k+1})-\hat{L}(\hat{z}^*))]^{1/2}\\
&\le\|x^k-x^{k-1}\|+\|x^{k-1}-x^{k-2}\|+\|x^{k-2}-x^{k-3}\|\\
&\quad+4\frac{\kappa}{\sigma_1}[\varphi(\hat{L}(\hat{z}^{k})-\hat{L}(\hat{z}^*))-\varphi(\hat{L}(\hat{z}^{k+1})-\hat{L}(\hat{z}^*))],
\end{align*}
so that
\begin{align*}
&\quad4\|x^k-x^{k+1}\|\\
&\le\|x^k-x^{k-1}\|+\|x^{k-1}-x^{k-2}\|+\|x^{k-2}-x^{k-3}\|\\
&\quad+4\frac{\kappa}{\sigma_1}[\varphi(\hat{L}(\hat{z}^{k})-\hat{L}(\hat{z}^*))-\varphi(\hat{L}(\hat{z}^{k+1})-\hat{L}(\hat{z}^*))].
\end{align*}
Consequently we have
\begin{align*}
&\quad \sum_{i=k_1}^{k}4\|x^i-x^{i+1}\|\\
&\le\sum_{i=k_1}^{k}(\|x^i-x^{i-1}\|+\|x^{i-1}-x^{i-2}\|+\|x^{i-2}-x^{i-3}\|)\\
&\quad+4\frac{\kappa}{\sigma_1}\sum_{i=k_1}^{k}
[\varphi(\hat{L}(\hat{z}^i)-\hat{L}(\hat{z}^*))-\varphi(\hat{L}(\hat{z}^{i+1})-\hat{L}(\hat{z}^*))],
\end{align*}
which is equivalent to
\begin{align*}
&\quad\sum_{i=k_1}^{k}\|x^i-x^{i+1}\|\\
&\le\sum_{i=k_1}^{k}(\|x^i-x^{i-1}\|-\|x^i-x^{i+1}\|)\\
&\quad+\sum_{i=k_1}^{k}(\|x^{i-1}-x^{i-2}\|-\|x^i-x^{i+1}\|)\\
&\quad+\sum_{i=k_1}^{k}(\|x^{i-2}-x^{i-3}\|-\|x^i-x^{i+1}\|)\\
&\quad+4\frac{\kappa}{\sigma_1}\sum_{i=k_1}^{k}
[\varphi(\hat{L}(\hat{z}^i)
-\hat{L}(\hat{z}^*))-\varphi(\hat{L}(\hat{z}^{i+1})-\hat{L}(\hat{z}^*))]\\
&\le3\|x^{k_1}-x^{k_1-1}\|+2\|x^{k_1-1}-x^{k_1-2}\|+\|x^{k_1-2}-x^{k_1-3}\|\\
&\quad+4\frac{\kappa}{\sigma_1}[\varphi(\hat{L}(\hat{z}^{k_1})-\hat{L}(\hat{z}^*))-\varphi(\hat{L}(\hat{z}^{k+1})-\hat{L}(\hat{z}^*))]\\
&\le3\|x^{k_1}-x^{k_1-1}\|+2\|x^{k_1-1}-x^{k_1-2}\|+\|x^{k_1-2}-x^{k_1-3}\|\\
&\quad+4\frac{\kappa}{\sigma_1}\varphi(\hat{L}(\hat{z}^{k_1})-\hat{L}(\hat{z}^*)),
\end{align*}
where the last inequality follows from the fact that
$\varphi(\hat{L}(\hat{z}^{k+1})-\hat{L}(\hat{z}^*))\ge0.$
 Since $k$ is chosen arbitrarily, we deduce that
$\sum_{k=0}^{\infty}\|x^k-x^{k+1}\|<\infty.$ It follows from  the
previous lemma that
\begin{align*}
\|q^{k+1}-q^{k}\|&\le\kappa_1(\|x^{k+1}-x^{k}\|+\|x^{k}-x^{k-1}\|\\
&\quad +\|y^{k+1}-y^k\|),\\
\|y^k-y^{k+1}\|&\le\kappa_2(\|x^k-x^{k+1}\|+\|x^k-x^{k-1}\|\\
&\quad +\|x^{k-1}-x^{k-2}\|).
\end{align*}
Hence $\sum_{k=0}^{\infty}(\|y^k-y^{k+1}\|+\|q^k-q^{k+1}\|)<\infty.$
Moreover we note that
\begin{align*}
\|z^k-z^{k+1}\|&=(\|x^k-x^{k+1}\|^2+\|y^k-y^{k+1}\|^2\\
&\quad+\|q^{k+1}-q^{k}\|^2)^{1/2}\\
&\le \|x^k-x^{k+1}\| +\|y^k-y^{k+1}\|\\
&\quad+\|q^{k+1}-q^{k}\|,
\end{align*}
so that we can conclude $\sum_{k=0}^{\infty}\|z^k-z^{k+1}\|<\infty.$
Consequently $(z^k)$ is a Cauchy sequence and thus is convergent,
which together with Lemma \ref{E2} completes the proof.
\end{proof}

\begin{remark}
We can deduce from \eqref{A1} that $p^k$ is bounded if $x^k$ is. So
in the above theorem, it suffices to assume that the primal
variables $x^k$ and $y^k$ are bounded, which can be automatically
fulfilled in many particular cases.
 For example, the boundedness
of $x^k$ or $y^k$ can be obtained by assuming the coerciveness of
$f$ or $g$.

\end{remark}

\subsection{The case that $B$  is not injective}

\begin{lemma}\label{L1}
Let Assumption \ref{j2} be fulfilled. For each $k\in\mathbb{N}$
there exists $\sigma_i>0, i=0,1$ such that
\begin{align*}
&\sigma_1(\|x^{k+1}-x^k\|^2+\|y^{k+1}-y^k\|^2)\\
&\le\tilde{L}(x^{k},y^{k},p^{k},
x^{k-1})-\tilde{L}(x^{k+1},y^{k+1},p^{k+1},x^k),
\end{align*}
 where
$\tilde{L}(x,y,p,\tilde{x}):=L_\alpha(x,y,p)+\frac{\sigma_0}{2}
\|x-\tilde{x}\|^2.$
\end{lemma}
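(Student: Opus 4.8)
The plan is to follow the route of Lemma~\ref{E1}, with one new ingredient supplied by the strong convexity of $\psi$. The dual estimate \eqref{D1}, which bounds $\|p^{k+1}-p^k\|^2$ in terms of $\|x^{k+1}-x^k\|^2$ and $\|x^k-x^{k-1}\|^2$, carries over verbatim: its derivation uses only the Fermat rule \eqref{D2} for the $x$-subproblem together with the full-row-rank condition $AA^{\top}\succeq\mu_0 I$, both of which are retained in Assumption~\ref{j2}. In the same way, condition (b) still yields the $x$-descent \eqref{B2}, and the dual update \eqref{A4} still gives the identity $L_{\alpha}(x^{k+1},y^{k+1},p^{k+1})-L_{\alpha}(x^{k+1},y^{k+1},p^k)=\frac1\alpha\|p^{k+1}-p^k\|^2$. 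So the only thing that must be redone is the contribution of the $y$-subproblem.

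The \emph{new} step is to extract a $\|y^{k+1}-y^k\|^2$-decrease from \eqref{A3}. Since $y^{k+1}$ minimizes $L_{\alpha}(x^k,\cdot,p^k)+\triangle_{\psi}(\cdot,y^k)$, comparing the objective value at $y^{k+1}$ with that at $y^k$ gives $L_{\alpha}(x^k,y^{k+1},p^k)+\triangle_{\psi}(y^{k+1},y^k)\le L_{\alpha}(x^k,y^k,p^k)$; because $\psi$ is $\mu_2$-strongly convex, Proposition~\ref{pro} yields $\triangle_{\psi}(y^{k+1},y^k)\ge\frac{\mu_2}{2}\|y^{k+1}-y^k\|^2$, hence
\begin{align*}
L_{\alpha}(x^k,y^{k+1},p^k)\le L_{\alpha}(x^k,y^k,p^k)-\tfrac{\mu_2}{2}\|y^{k+1}-y^k\|^2.
\end{align*}
This replaces the weaker inequality $L_{\alpha}(x^k,y^{k+1},p^k)\le L_{\alpha}(x^k,y^k,p^k)$ that sufficed in the injective case (where control of $\|y^{k+1}-y^k\|$ was instead recovered afterwards from \eqref{D1} and \eqref{B3}).

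Chaining this with \eqref{B2} and the dual identity, exactly as in the derivation of \eqref{D4}, gives
\begin{align*}
L_{\alpha}(x^{k+1},y^{k+1},p^{k+1})-L_{\alpha}(x^k,y^k,p^k)\le-\tfrac{\mu_1}{2}\|x^{k+1}-x^k\|^2-\tfrac{\mu_2}{2}\|y^{k+1}-y^k\|^2+\tfrac1\alpha\|p^{k+1}-p^k\|^2,
\end{align*}
into which we substitute \eqref{D1} as before. Moving the $\|x^{k+1}-x^k\|^2$ term with coefficient $\tfrac{2\ell_{\phi}^2}{\alpha\mu_0}$ to the left to form $\tilde{L}(x^{k+1},y^{k+1},p^{k+1},x^k)$ — with $\sigma_0$ chosen exactly as in Lemma~\ref{E1} — turns the $\|x^k-x^{k-1}\|^2$ term into the telescoping part and leaves on the right only negative multiples of $\|x^{k+1}-x^k\|^2$ and $\|y^{k+1}-y^k\|^2$. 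Taking $\sigma_1:=\min\bigl(\tfrac{\mu_1}{2}-\tfrac{2(\ell_{f}+\ell_{\phi})^2+2\ell_{\phi}^2}{\alpha\mu_0},\ \tfrac{\mu_2}{2}\bigr)$, which is positive by the standing hypothesis $\alpha>\tfrac{4((\ell_{f}+\ell_{\phi})^2+\ell_{\phi}^2)}{\mu_1\mu_0}$, yields the asserted inequality with $\|x^{k+1}-x^k\|^2+\|y^{k+1}-y^k\|^2$ on the left. There is no serious obstacle here; the one point requiring care is bookkeeping — one must carry the $\frac{\mu_2}{2}\|y^{k+1}-y^k\|^2$ decrease through the entire descent chain rather than discarding it, since when $B$ is not injective it cannot be reconstructed from the $x$- and $p$-increments as it was in the injective case.
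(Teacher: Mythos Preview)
Your proposal is correct and follows essentially the same route as the paper's proof: extract the extra $\tfrac{\mu_2}{2}\|y^{k+1}-y^k\|^2$ decrease from the $y$-subproblem via the strong convexity of $\psi$, chain it with \eqref{B2} and the dual identity as in \eqref{D4}, substitute \eqref{D1}, and telescope by absorbing the $\|x^k-x^{k-1}\|^2$ term into $\tilde{L}$. The only cosmetic difference is that the paper sets $\sigma_0=\tfrac{4\ell_{\phi}^2}{\alpha\mu_0}$ here (so that $\tfrac{\sigma_0}{2}$ matches the coefficient $\tfrac{2\ell_{\phi}^2}{\alpha\mu_0}$ coming from \eqref{D1}), whereas you refer back to the value in Lemma~\ref{E1}; this is harmless bookkeeping and does not affect the argument.
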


\begin{proof}
Since $\psi$ is strongly convex, we have
\begin{align*}
L_{\alpha}(x^{k},y^{k+1},p^{k})&\le L_{\alpha}(x^{k},y^{k},p^{k})-\Delta_{\psi}(y^{k+1},y^k)\\
&\le L_{\alpha}(x^{k},y^{k},p^{k})-\frac{\mu_2}{2}\|y^{k+1}-y^k\|^2,
\end{align*}
which implies
\begin{align*}
&L_{\alpha}(x^{k+1},y^{k+1},p^{k})-L_{\alpha}(x^{k},y^{k},p^{k})\\
&\le-\frac{\mu_1}{2}\|x^{k+1}-x^k\|^2-\frac{\mu_2}{2}\|y^{k+1}-y^k\|^2.
\end{align*}
Moreover we deduce form \eqref{D1} and \eqref{A4} that
\begin{align*}
&L_{\alpha}(x^{k+1},y^{k+1},p^{k+1})-L_{\alpha}(x^{k},y^{k},p^{k})\\
&=L_{\alpha}(x^{k+1},y^{k+1},p^{k+1})-L_{\alpha}(x^{k+1},y^{k+1},p^{k})\\
&\quad +L_{\alpha}(x^{k+1},y^{k+1},p^{k})-L_{\alpha}(x^{k},y^{k},p^{k})\\
&\le -\frac{\mu_1}{2}\|x^{k+1}-x^k\|^2
-\frac{\mu_2}{2}\|y^{k+1}-y^k\|^2\\
&\quad+\frac1\alpha\|p^{k+1}-p^{k}\|^2\\
&\le -\frac{\mu_1}{2}\|x^{k+1}-x^k\|^2
-\frac{\mu_2}{2}\|y^{k+1}-y^k\|^2\\
&\quad+\frac{2(\ell_{f}+\ell_{\phi})^2}{\alpha\mu_0}
\|x^{k+1}-x^{k}\|^2\\
&\quad +\frac{2\ell_{\phi}^2}{\alpha\mu_0}\|x^{k}-x^{k-1}\|^2,
\end{align*}
which is equivalent to
\begin{align*}
&L_{\alpha}(x^{k+1},y^{k+1},p^{k+1})+\frac{2\ell_{\phi}^2}{\alpha\mu_0}\|x^{k+1}-x^k\|^2\\
&\le
L_{\alpha}(x^{k},y^{k},p^{k})+\frac{2\ell_{\phi}^2}{\alpha\mu_0}\|x^{k}-x^{k-1}\|^2\\
&\quad-\frac{\mu_2}{2}\|y^{k+1}-y^k\|^2\\
&\quad-\left(\frac{\mu_1}{2}-\frac{2(\ell_{f}+\ell_{\phi})^2}{\alpha\mu_0}-
  \frac{2\ell_{\phi}^2}{\alpha\mu_0}\right)\|x^{k}-x^{k+1}\|^2.
\end{align*}
Let  us now define
\begin{align*}
&\sigma_0=\frac{4\ell_{\phi}^2}{\alpha\mu_0}, \ \sigma_1=
\min\left(\frac{\mu_2}{2},\frac{\mu_1}{2}-\frac{2(\ell_{f}+\ell_{\phi})^2}{\alpha\mu_0}-
  \frac{2\ell_{\phi}^2}{\alpha\mu_0}\right).
\end{align*}
Clearly both  $\sigma_i$ are positive and thus the desired
inequality follows.
\end{proof}

\begin{lemma}\label{L2}
If the sequence $z^k:=(x^k,y^k,p^k)$ is bounded, then
$$\sum_{k=0}^{\infty}\|z^k-z^{k+1}\|^2<\infty.$$
 In particular the
sequence $\|z^k-z^{k+1}\|$ is asymptotically regular, namely
$\|z^k-z^{k+1}\|\to0$ as $k\to\infty.$ Moreover any cluster point
 of $z^k$ is a stationary
point of $L_\alpha.$
\end{lemma}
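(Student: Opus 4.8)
The plan is to mirror the proof of Lemma~\ref{E2}, with Lemma~\ref{L1} playing the role of Lemma~\ref{E1}. The essential simplification under Assumption~\ref{j2} is that the strong convexity of $\psi$ supplies control of $\|y^{k+1}-y^k\|$ directly in the descent estimate, so that the injectivity of $B$ and the auxiliary bound \eqref{B3} used in Lemma~\ref{E2} are no longer needed.

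First I would set $\tilde{z}^k:=(x^k,y^k,p^k,x^{k-1})$ and show that $\tilde{L}(\tilde{z}^k)$ converges and is bounded below. Boundedness of $(z^k)$ makes $(\tilde{z}^k)$ bounded, hence it has a convergent subsequence $\tilde{z}^{k_j}\to\tilde{z}^*$; lower semicontinuity of $\tilde{L}$ gives $\liminf_j\tilde{L}(\tilde{z}^{k_j})\ge\tilde{L}(\tilde{z}^*)$, while Lemma~\ref{L1} shows $\tilde{L}(\tilde{z}^k)$ is nonincreasing; therefore $\tilde{L}(\tilde{z}^k)$ converges and $\tilde{L}(\tilde{z}^k)\ge\tilde{L}(\tilde{z}^*)$ for every $k$. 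Telescoping Lemma~\ref{L1} over $i=0,\dots,k$,
\[
\sigma_1\sum_{i=0}^{k}\big(\|x^{i+1}-x^i\|^2+\|y^{i+1}-y^i\|^2\big)\le\tilde{L}(\tilde{z}^0)-\tilde{L}(\tilde{z}^{k+1})\le\tilde{L}(\tilde{z}^0)-\tilde{L}(\tilde{z}^*)<\infty,
\]
and letting $k\to\infty$ yields $\sum_k\|x^{k+1}-x^k\|^2<\infty$ and $\sum_k\|y^{k+1}-y^k\|^2<\infty$. Inserting the first series into \eqref{D1} gives $\sum_k\|p^{k+1}-p^k\|^2<\infty$, and summing the three gives $\sum_k\|z^{k+1}-z^k\|^2<\infty$; in particular $\|z^{k+1}-z^k\|\to0$.

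For the cluster-point assertion, let $z^*=(x^*,y^*,p^*)$ with $z^{k_j}\to z^*$; asymptotic regularity forces $z^{k_j+1}\to z^*$ as well. The optimality conditions for \eqref{A3}--\eqref{A4} read $p^{k+1}=p^k+\alpha(Ax^{k+1}-By^{k+1})$, $-\nabla f(x^{k+1})=A^\top p^{k+1}+\nabla\phi(x^{k+1})-\nabla\phi(x^k)$, and $B^\top p^{k+1}+\alpha B^\top(Ax^k-Ax^{k+1})+\nabla\psi(y^k)-\nabla\psi(y^{k+1})\in\partial g(y^{k+1})$. Passing to the limit along $k_j$, using $\|x^{k+1}-x^k\|\to0$, $\|y^{k+1}-y^k\|\to0$, the continuity of $\nabla f,\nabla\phi,\nabla\psi$, and the closedness property of the limiting subdifferential, gives $Ax^*=By^*$, $-\nabla f(x^*)=A^\top p^*$ and $B^\top p^*\in\partial g(y^*)$, i.e.\ $z^*$ is a stationary point of $L_\alpha$.

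Since the descent estimate now controls $\|y^{k+1}-y^k\|$ outright, no real obstacle remains in the summability part; the only delicate step, exactly as in Lemma~\ref{E2}, is justifying $B^\top p^*\in\partial g(y^*)$, which requires $g(y^{k_j+1})\to g(y^*)$. This is obtained by combining the lower semicontinuity of $g$ with the upper estimate $\limsup_j g(y^{k_j+1})\le g(y^*)$ coming from testing the $y$-subproblem \eqref{A3} against $y=y^*$ and using $\tilde{L}(\tilde{z}^k)\to\tilde{L}(\tilde{z}^*)$; all remaining steps are routine telescoping and limit passages.
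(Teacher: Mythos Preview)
Your proposal is correct and follows essentially the same route as the paper: set $\tilde{z}^k=(x^k,y^k,p^k,x^{k-1})$, use boundedness and lower semicontinuity to see that the nonincreasing sequence $\tilde{L}(\tilde{z}^k)$ converges and is bounded below, telescope Lemma~\ref{L1} to get $\sum_k(\|x^{k+1}-x^k\|^2+\|y^{k+1}-y^k\|^2)<\infty$, invoke \eqref{D1} for the $p$-increments, and then pass to the limit in the optimality conditions of \eqref{A3}--\eqref{A4} exactly as in Lemma~\ref{E2}. The paper's proof is in fact terser---it simply says the cluster-point claim is clear---while you spell out the $g(y^{k_j+1})\to g(y^*)$ step, but the argument is the same.
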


\begin{proof}
Analogously, we can deduce as in Lemma \ref{E2} that the sequence
$\tilde{L}(\tilde{z}^{k})$ is convergent and
$\tilde{L}(\tilde{z}^{k})\ge
 \tilde{L}(\tilde{z}^{*})$ for each $k$, where $\tilde{z}^{k}:=(x^k,y^k,p^k,
 x^{k-1})$ and $\tilde{L}$ is defined as in Lemma \ref{L1}.
Now fix any $k\in\mathbb{N}$. It then follows from Lemma \ref{L1}
that
\begin{align*}
&\sigma_1\sum_{i=0}^{k}(\|x^i-x^{i+1}\|^2+\|y^i-y^{i+1}\|^2)\\
&\le\sum_{i=0}^{k}(\tilde{L}(\tilde{z}^i)-\tilde{L}(\tilde{z}^{i+1})=\tilde{L}(\tilde{z}^{0})-\tilde{L}(\tilde{z}^{k+1})\\
&\le\tilde{L}(\tilde{z}^{0})-\tilde{L}(\tilde{z}^*)<\infty.
\end{align*}
Since $k$ is chosen arbitrarily, we can deduce that
$\sum_{k=0}^{\infty}(\|x^k-x^{k+1}\|^2+\|y^k-y^{k+1}\|^2)<\infty,$
which with \eqref{D1} implies $\sum_k\|p^k-p^{k+1}\|^2<\infty$, so
that $\sum_{k=0}^{\infty}\|z^k-z^{k+1}\|^2<\infty;$ in particular
$\|z^k-z^{k+1}\|\to0$. It is clear  that any cluster point of $z^k$ is
a stationary point of function $L_\alpha$.
\end{proof}

The proof of the following lemma  is similar to  that of Lemma
\ref{E3}, so we omit the details.

\begin{lemma}\label{L3}
Let $\tilde{z}^{k+1}=(x^{k+1},y^{k+1},p^{k+1},x^{k})$. Then for each
$k$ there exists $\kappa>0$ such that
\begin{align*}
\mathrm{dist}(0,\partial
\tilde{L}(\tilde{z}^{k+1}))&\le\kappa(\|x^k-x^{k+1}\|+\|y^k-y^{k+1}\|+\|x^k-x^{k-1}\|).
\end{align*}
\end{lemma}


\begin{theorem}\label{T2}
Assume that Assumption \ref{j2} is fulfilled. If the sequence
$z^k:=(x^k,y^k,q^k)$ is bounded, then
$$\sum_{k=0}^{\infty}\|z^k-z^{k+1}\|<\infty.$$
In particular the sequence $(z^k)$ converges to a stationary point
of $L_{\alpha}$.
\end{theorem}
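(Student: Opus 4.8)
The plan is to mirror, essentially verbatim, the proof of Theorem \ref{T1}, but with the auxiliary function $\tilde{L}$ in place of $\hat{L}$ and with Lemmas \ref{L1}--\ref{L3} in place of Lemmas \ref{E1}--\ref{E3}. The crucial structural simplification relative to Theorem \ref{T1} is that under Assumption \ref{j2} the descent inequality of Lemma \ref{L1} already controls both $\|x^{k+1}-x^k\|^2$ and $\|y^{k+1}-y^k\|^2$, so I will not need the separate estimate \eqref{B3} that exploited injectivity of $B$ to recover summability of the $y$-increments; the $y$-increments come for free from the descent of $\tilde{L}$.

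First I would set $\tilde{z}^{k+1}=(x^{k+1},y^{k+1},p^{k+1},x^{k})$, let $\Omega$ be its cluster-point set, and invoke Lemma \ref{L2}: since $z^k$ is bounded and $\|z^k-z^{k+1}\|\to0$, the sequences $x^k,x^{k+1}$ share cluster points, $\tilde{L}(\tilde{z}^k)$ is convergent, and by lower semicontinuity of $g$ (hence of $\tilde{L}$) combined with the descent property, $\tilde{L}$ is constant on $\Omega$, equal to some value $\tilde{L}^*:=\tilde{L}(\tilde z^*)$. Then I would split into the two standard cases. If $\tilde{L}(\tilde z^{k_0})=\tilde{L}^*$ for some $k_0$, then Lemma \ref{L1} forces $\|x^{k+1}-x^k\|^2+\|y^{k+1}-y^k\|^2=0$ for all $k>k_0$, so $x^k$ and $y^k$ are eventually constant, hence (via \eqref{D1} and \eqref{A4}) so is $p^k$, and $\sum\|z^k-z^{k+1}\|<\infty$ trivially.

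In the main case $\tilde{L}(\tilde z^k)>\tilde{L}^*$ for all $k$: since $f+g$ is sub-analytic, $\tilde{L}$ (a sum of $f+g$, smooth quadratic and bilinear terms) is sub-analytic and satisfies the K-L inequality, so Lemma \ref{KL} applies on a neighborhood of the compact set $\Omega$. For $k$ large, $\tilde z^k$ lies in that neighborhood with $\tilde{L}^*<\tilde{L}(\tilde z^k)<\tilde{L}^*+\eta$, giving $\varphi'(\tilde{L}(\tilde z^k)-\tilde{L}^*)\,\mathrm{dist}(0,\partial\tilde{L}(\tilde z^k))\ge1$. Combining this with Lemma \ref{L3} (which bounds that distance by $\kappa(\|x^k-x^{k+1}\|+\|y^k-y^{k+1}\|+\|x^k-x^{k-1}\|)$, and note the distance at step $k$ involves the increments at $k-1$, so I should index Lemma \ref{L3} with $k-1$ in place of $k+1$), the concavity of $\varphi$, and Lemma \ref{L1}, I get the telescoping-type inequality
\begin{align*}
\|x^{k+1}-x^k\|^2+\|y^{k+1}-y^k\|^2\le\frac{\kappa}{\sigma_1}\,\Delta_k\,\big[\varphi(\tilde{L}(\tilde z^{k})-\tilde{L}^*)-\varphi(\tilde{L}(\tilde z^{k+1})-\tilde{L}^*)\big],
\end{align*}
where $\Delta_k$ collects the finitely many increments $\|x^{k-i}-x^{k-i-1}\|+\|y^{k-i}-y^{k-i-1}\|$ appearing in Lemma \ref{L3}. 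Taking square roots, applying $2ab\le a^2+b^2$, and summing over $k$ makes the $\varphi$-terms telescope (using $\varphi\ge0$) while the $\Delta_k$-terms are absorbed up to finitely many boundary increments, yielding $\sum_k(\|x^k-x^{k+1}\|+\|y^k-y^{k+1}\|)<\infty$. Then \eqref{D1} gives $\sum_k\|p^k-p^{k+1}\|<\infty$, hence $\sum_k\|z^k-z^{k+1}\|<\infty$, so $(z^k)$ is Cauchy; its limit is a stationary point by Lemma \ref{L2}.

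The main obstacle is bookkeeping rather than conceptual: one must carefully track the index shift in the K-L step (the subgradient bound in Lemma \ref{L3} at iterate $\tilde z^{k+1}$ involves increments up to $x^{k-1}-x^{k-2}$, so when applied at $\tilde z^k$ it reaches back to $x^{k-2}-x^{k-3}$), and then verify that after the square-root / AM-GM step the finitely many "lag" terms telescope against themselves so that only a fixed number of boundary terms survive — exactly as in the proof of Theorem \ref{T1}. Since all these manipulations are identical in form to the injective-$B$ case (indeed simpler, as the $y$-increments need no auxiliary treatment), I would state this explicitly and refer back to the proof of Theorem \ref{T1} for the remaining routine details.
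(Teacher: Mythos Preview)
Your proposal is correct and follows essentially the same route as the paper's own proof. One small bookkeeping note: Lemma~\ref{L3} applied at $\tilde z^k$ reaches back only to $\|x^{k-1}-x^{k-2}\|$, not to $\|x^{k-2}-x^{k-3}\|$ as you write (that deeper lag is the depth of Lemma~\ref{E3}), so the telescoping is one step shallower than you anticipate and the paper carries it out with the factor $3$ (from $3(a+b)\le 3\sqrt{2}\sqrt{a^2+b^2}$) in place of the $4$ used in Theorem~\ref{T1}.
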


\begin{proof}
Let $\tilde{z}^{k+1}=(x^{k+1},y^{k+1},q^{k+1},x^{k})$ and let
$\Omega$ be  the cluster point  set of $\tilde{z}^k$. Similar to the
proof of
 Theorem \ref{T1}, we can find  a sufficient large $k_1$ such that for all $k>k_1$
 $$\tilde{z}^k\in\{\tilde{z} :\mathrm{dist}(\tilde{z},\Omega)<\delta \}\cap\{\tilde{z} :
\tilde{L}(\tilde{z}^{*})<\tilde{L}(\tilde{z})<\tilde{L}(\tilde{z}^{*})+\eta\}.$$

In what follows, let us fix  $k>k_1$. Then the K-L inequality
$$\mathrm{dist}(0,\partial\tilde{L}(\tilde{z}^k))\varphi\prime(\tilde{L}(\tilde{z}^{k})-\tilde{L}(\tilde{z}^*))\ge1$$
together with Lemma \ref{L3} implies
\begin{align*}
&\frac{1}{\varphi\prime(\tilde{L}(\tilde{z}^{k})-\tilde{L}(\tilde{z}^*))}\\
&\le\kappa(\|x^k-x^{k-1}\|+\|y^k-y^{k-1}\|+\|x^{k-2}-x^{k-1}\|),
\end{align*}
so that the concavity of $\varphi$ yields
\begin{align*}
&\quad\tilde{L}(\tilde{z}^{k})-\tilde{L}(\tilde{z}^{k+1})\\
&=(\tilde{L}(\tilde{z}^{k})-\tilde{L}(\tilde{z}^*))-(\tilde{L}(\tilde{z}^{k+1})-\tilde{L}(\tilde{z}^*))\\
&\le\frac{\varphi(\tilde{L}(\tilde{z}^{k})-\tilde{L}(\tilde{z}^*))-\varphi(\tilde{L}(\tilde{z}^{k+1})-\tilde{L}(\tilde{z}^*))}{\varphi\prime(\tilde{L}(\tilde{z}^{k})-\tilde{L}(\tilde{z}^*))}\\
&\le\kappa(\|x^k-x^{k-1}\|+\|y^k-y^{k-1}\|+\|x^{k-2}-x^{k-1}\|)\\
&\quad\times[\varphi(\tilde{L}(\tilde{z}^{k})-\tilde{L}(\tilde{z}^*))-\varphi(\tilde{L}(\tilde{z}^{k+1})-\tilde{L}(\tilde{z}^*))].
\end{align*}
From Lemma \ref{L1}, this implies
\begin{align*}
&\quad\|x^{k+1}-x^k\|^2+\|y^{k+1}-y^k\|^2\\
&\le\frac{\kappa}{\sigma_1}(\|x^k-x^{k-1}\|+\|y^k-y^{k-1}\|+\|x^{k-2}-x^{k-1}\|)\\
&\quad\times[\varphi(\tilde{L}(\tilde{z}^{k})-\tilde{L}(\tilde{z}^*))-\varphi(\tilde{L}(\tilde{z}^{k+1})-\tilde{L}(\tilde{z}^*))],
\end{align*}
which is equivalent to
\begin{align*}
&\quad3(\|x^k-x^{k+1}\|+\|y^k-y^{k+1}\|)\\
&\le3\sqrt{2}(\|x^{k+1}-x^k\|^2+\|y^{k+1}-y^k\|^2)^{1/2}\\
&\le2(\|x^k-x^{k-1}\|+\|y^k-y^{k-1}\|+\|x^{k-2}-x^{k-1}\|)^{1/2}\\
&\quad\times\sqrt{\frac{9\kappa}{2\sigma_1}}[\varphi(\tilde{L}(\tilde{z}^{k})
-\tilde{L}(\tilde{z}^*))-\varphi(\tilde{L}(\tilde{z}^{k+1})-\tilde{L}(\tilde{z}^*))]^{1/2}.
\end{align*}
It is readily seen that
\begin{align*}
&\quad \ 2(\|x^k-x^{k-1}\|+\|y^k-y^{k-1}\|+\|x^{k-2}-x^{k-1}\|)^{1/2}\\
&\quad\times\sqrt{\frac{9\kappa}{2\sigma_1}}[\varphi(\tilde{L}(\tilde{z}^{k})-\tilde{L}(\tilde{z}^*))-\varphi(\tilde{L}(\tilde{z}^{k+1})-\tilde{L}(\tilde{z}^*))]^{1/2}\\
&\le \|x^k-x^{k-1}\|+\|y^k-y^{k-1}\|+\|x^{k-2}-x^{k-1}\|\\
&\quad+\frac{9\kappa}{2\sigma_1}[\varphi(\tilde{L}(\tilde{z}^{k})-\tilde{L}(\tilde{z}^*))-\varphi(\tilde{L}(\tilde{z}^{k+1})-\tilde{L}(\tilde{z}^*))],
\end{align*}
so that
\begin{align*}
&\quad \ 3(\|x^k-x^{k+1}\|+\|y^k-y^{k+1}\|)\\
&\le \|x^k-x^{k-1}\|+\|y^k-y^{k-1}\|+\|x^{k-2}-x^{k-1}\|\\
&\quad+\frac{9\kappa}{2\sigma_1}[\varphi(\tilde{L}(\tilde{z}^{k})-\tilde{L}(\tilde{z}^*))-\varphi(\tilde{L}(\tilde{z}^{k+1})-\tilde{L}(\tilde{z}^*))].
\end{align*}
Hence we have
\begin{align*}
&\quad \ \sum_{i=k_1}^{k}3(\|x^i-x^{i+1}\|+\|y^i-y^{i+1}\|)\\
&\le\sum_{i=k_1}^{k}(\|x^i-x^{i-1}\|+\|y^i-y^{i-1}\|+\|x^{i-1}-x^{i-2}\|)\\
&\quad+\frac{9\kappa}{2\sigma_1}\sum_{i=k_1}^{k}
[\varphi(\tilde{L}(\tilde{z}^i)-\tilde{L}(\tilde{z}^*))-\varphi(\tilde{L}(\tilde{z}^{i+1})-\tilde{L}(\tilde{z}^*))],
\end{align*}
from which it follows that
\begin{align*}
&\quad \ \sum_{i=k_1}^{k}\|x^i-x^{i+1}\|+2\sum_{i=k_1}^{k}\|y^i-y^{i+1}\|\\
&\le\sum_{i=k_1}^{k}(\|x^i-x^{i-1}\|-\|x^i-x^{i+1}\|)\\
&\quad +\sum_{i=k_1}^{k}(\|x^{i-1}-x^{i-2}\|-\|x^i-x^{i+1}\|)\\
&\quad+\sum_{i=k_1}^{k}(\|y^i-y^{i-1}\|-\|y^i-y^{i+1}\|)\\
&\quad +\frac{9\kappa}{2\sigma_1}\sum_{i=k_1}^{k}
[\varphi(\tilde{L}(\tilde{z}^i)
-\tilde{L}(\tilde{z}^*))-\varphi(\tilde{L}(\tilde{z}^{i+1})-\tilde{L}(\tilde{z}^*))]\\
&=\|x^{k_1-1}-x^{k_1-2}\|+2\|x^{k_1}-x^{k_1-1}\|\\
&\quad -\|x^{k-1}-x^{k_1-2}\|-2\|x^{k}-x^{k+1}\|\\
&\quad+\|y^{k_1-1}-y^{k_1}\|-\|y^{k}-y^{k+1}\|\\
&\quad +\frac{9\kappa}{2\sigma_1}
[\varphi(\tilde{L}(\tilde{z}^{k_1})-\tilde{L}(\tilde{z}^*))-\varphi(\tilde{L}(\tilde{z}^{k+1})-\tilde{L}(\tilde{z}^*))]\\
&\le\|x^{k_1-1}-x^{k_1-2}\|+2\|x^{k_1}-x^{k_1-1}\|+\|y^{k_1}-y^{k_1-1}\|\\
&\quad
+\frac{9\kappa}{2\sigma_1}\varphi(\tilde{L}(\tilde{z}^{k_1})-\tilde{L}(\tilde{z}^*)),
\end{align*}
where the last inequality follows from the fact that
$\varphi(\tilde{L}(\tilde{z}^{k_1})-\tilde{L}(\tilde{z}^*))\ge0.$
 Since $k$ is chosen arbitrarily, we can deduce that
$\sum_{k=0}^{\infty}(\|x^k-x^{k+1}\|+\|y^k-y^{k+1}\|)<\infty,$ which
together with \eqref{D6} enables us to deduce that
 $\sum_{k=0}^{\infty}\|q^k-q^{k+1}\|<\infty,$ and moreover
 $\sum_{k=0}^{\infty}\|z^k-z^{k+1}\|<\infty.$ Consequently
 $(z^k)$ is convergent, which together with
Lemma \ref{L1} completes the proof.
\end{proof}

\section{A demonstration example}

In compressed sensing, a fundamental problem is recovering an
$n$-dimensional sparse signal $x$ from a set of $m$ incomplete
measurements with $m<<n$. It is possible as long as the number of
nonzero elements of $x$ is small enough. In such case one needs to
find the sparsest solution of a linear system, which can be modeled
as
\begin{align*}
\min_{x\in\mathbb{R}^n}& \ \|x\|_0 \\
 {\rm s.t.}& \ Dx=b,
\end{align*}
where $D\in\mathbb{R}^{{m}\times n}$ is the  measurement matrix,
$b\in\mathbb{R}^{m}$ is the observed data, and $\|x\|_0$ denotes the
number of nonzero elements of $x$. In most cases, the sparsity 
 is usually  demonstrated under a linear transformation, for
example in total variation denoising \cite{rof}. This then requires
to solve:
\begin{align*}
\min_{x\in\mathbb{R}^n} & \ \|Ax\|_0  \nonumber\\
 {\rm s.t.}& \ Dx=b,
\end{align*}
 or its regularization version:
\begin{align}\label{p2}
\min_{x\in\mathbb{R}^n}\|Dx-b\|^2+\lambda\|Ax\|_0,
\end{align}
where $\lambda>0$ is a regularization parameter and
$A\in\mathbb{R}^{(n-1)\times n}$ is the difference matrix, say,
defined by
\begin{align}\label{D3}
A_{ij}=\left\{
  \begin{array}{cl}
  1, & \ j=i+1  \\
  -1, & \  j=i \\
  0, & \ otherwise.
  \end{array}
\right.
\end{align}
It is clear that the difference matrix has full-row rank.

In general, the above-mentioned problems are intractable because it
is in fact a NP-hard problem. To overcome this difficulty, one may
relax the $\ell_0$ norm to the $\ell_1$ norm as in \eqref{p2}, which
then leads to a convex composite problem:
\begin{align}\label{l1}
\begin{split}
\min & \ \|Dx-b\|^2+\lambda\|y\|_{1} \\
 {\rm s.t.}& \ Ax=y.
\end{split}
\end{align}
where  $\|x\|_1=\sum_i|x_i|$ stands for the  $\ell_1$ norm. Applying
 BADMM to problem \eqref{l1} with $\phi(x)=\psi(x)=\mu\|x\|^2/2$ yields
 \begin{align}\label{sadmm}
 \begin{split}
 y^{k+1}&=H(Ax^{k}+p^k/\alpha;\lambda/\alpha)\\
x^{k+1}&=(2D^{\top}D+\alpha A^{\top}A+\mu I)^{-1}w^{k+1} \\
p^{k+1}&=p^k+\alpha(Ax^{k+1}-By^{k+1}),
 \end{split}
\end{align}
where $w^{k+1}=\mu x^k+\alpha
A^{\top}y^{k+1}+2D^{\top}b-A^{\top}p^k$ and $S(\cdot;\mu)$ is the
soft shrinkage operator.

Nevertheless, the $\ell_1$ regularization has been shown to be
suboptimal in many cases; in particular  it cannot enforce further
sparsity, since the $\ell_1$ norm is a loose approximation of the
$\ell_0$ norm and often leads to an overpenalized problem. To
overcome the drawback caused by the $\ell_1$ regularization, an
alternative way is to replace the $\ell_1$ norm by the  $\ell_{1/2}$
quasi norm in problem \eqref{p2} (see e.g. \cite{xc,zlx,zfx,zxz}).
This then leads to the following nonconvex composite problem:
\begin{align}\label{c5}
\min & \ \|Dx-b\|^2+\lambda\|y\|^{1/2}_{1/2}\nonumber\\
 {\rm s.t.}& \ Ax=y.
\end{align}
 Applying BADMM to problem \eqref{c5} also with  $\phi(x)=\psi(x)=\mu\|x\|^2/2$ yields
 \begin{align}\label{hadmm}
 \begin{split}
 y^{k+1}&=H(Ax^{k}+p^k/\alpha;2\lambda/\alpha)\\
x^{k+1}&=(2D^{\top}D+\alpha A^{\top}A+\mu I)^{-1}w^{k+1} \\
p^{k+1}&=p^k+\alpha(Ax^{k+1}-By^{k+1}).
 \end{split}
\end{align}
Here $w^{k+1}=\mu x^k+\alpha A^{\top}y^{k+1}+2D^{\top}b-A^{\top}p^k
$ and $H(\cdot;\mu)$ is the half shrinkage operator \cite{xc}
defined as $H(x;\mu)=\{h_{ \mu}(x_1),h_{ \mu}(x_2)\cdots h_{
\mu}(x_n)\}^{\top}$ with
\begin{align*}
h_{ \mu}(x_i)=\left\{
        \begin{array}{cl}
\frac{2x_i}{3}\big(1+ \cos\frac23(\pi-\varphi(|x_i|))\big),
& \  |x_i|>\frac{\sqrt[3]{54}}{4}\mu^{2/3}; \\
0, & \ h_{ \mu}(x_i)=0,
        \end{array}
      \right.
\end{align*}
with $\varphi(x)=\arccos(\frac{ \mu}{8} (\frac{|x_i|}{3})^{-3/2})$.

For simplicity, we denote algorithms \eqref{hadmm} and \eqref{sadmm}
by HADMM and SADMM, respectively. We now conduct an experiment to
verify convergence of the nonconvex BADMM, and reveal its advantages in
sparsity-inducing and efficiency through comparing the performance of 
HADMM and SADMM. In the experiment, the difference
matrix $A\in\mathbb{R}^{511\times512}$ was generated according to
\eqref{D3}, and $D\in\mathbb{R}^{256\times512}$ was randomly
generated with Gaussian $\mathcal{N}(0,1/256)$ i.i.d. entries.  We
applied the HADMM and SADMM with the same parameters $\lambda
=0.015, \alpha=10$ and $\mu_1 =\mu_2= 10$.

The experimental results are shown in Figure 1, where the
restoration accuracy is measured by means of the mean squared error
\begin{align*}
\mathrm{MSE}(\|x^*-x^k\|) &= \frac1n \|x^*-x^k\|,\\
\mathrm{MSE}(\|y^*-y^k\|) &= \frac1n \|y^*-y^k\|.
\end{align*}
Here $(x^*,y^*)$ is the true solution of the problem. As shown in
Figure 1, both sequences $x^k$ and $y^k$  were fairly near the true
solution. i.e., the convergence is justified. It is readily seen
that HADMM converges faster than SADMM does. Moreover, this
difference is particularly notable for $y^k$. This supports in
partial the advantage of the nonconvex model \eqref{c5} over the
convex model \eqref{l1} for the considered problem.

\begin{figure}
\centering
\subfigure[$\mathrm{MSE}(\|x^k-x^*\|)$] { \label{fig:a}
\includegraphics[width=0.45\columnwidth]{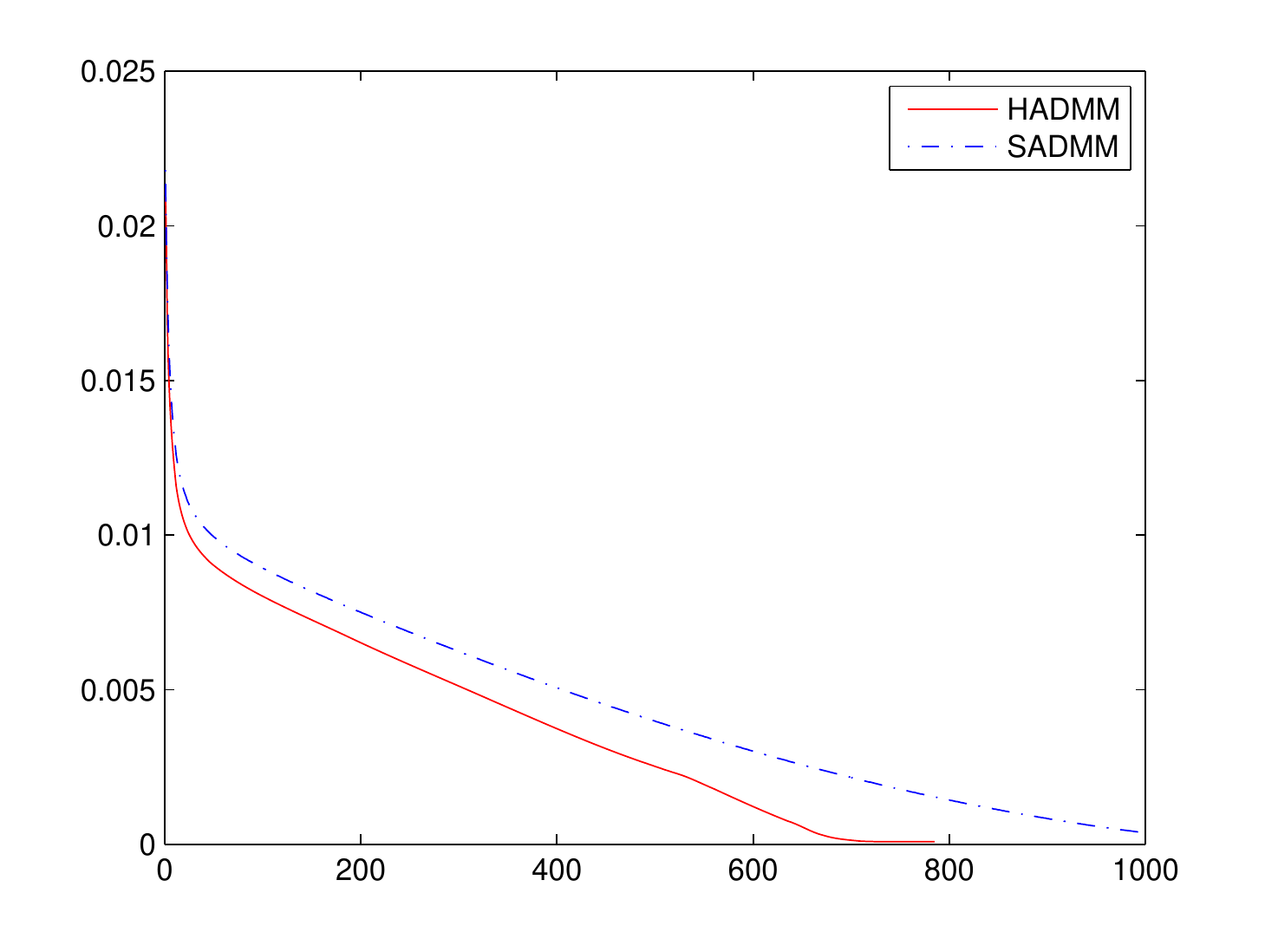}
}
\subfigure[$\mathrm{MSE}(\|y^k-y^*\|)$] { \label{fig:b}
\includegraphics[width=0.45\columnwidth]{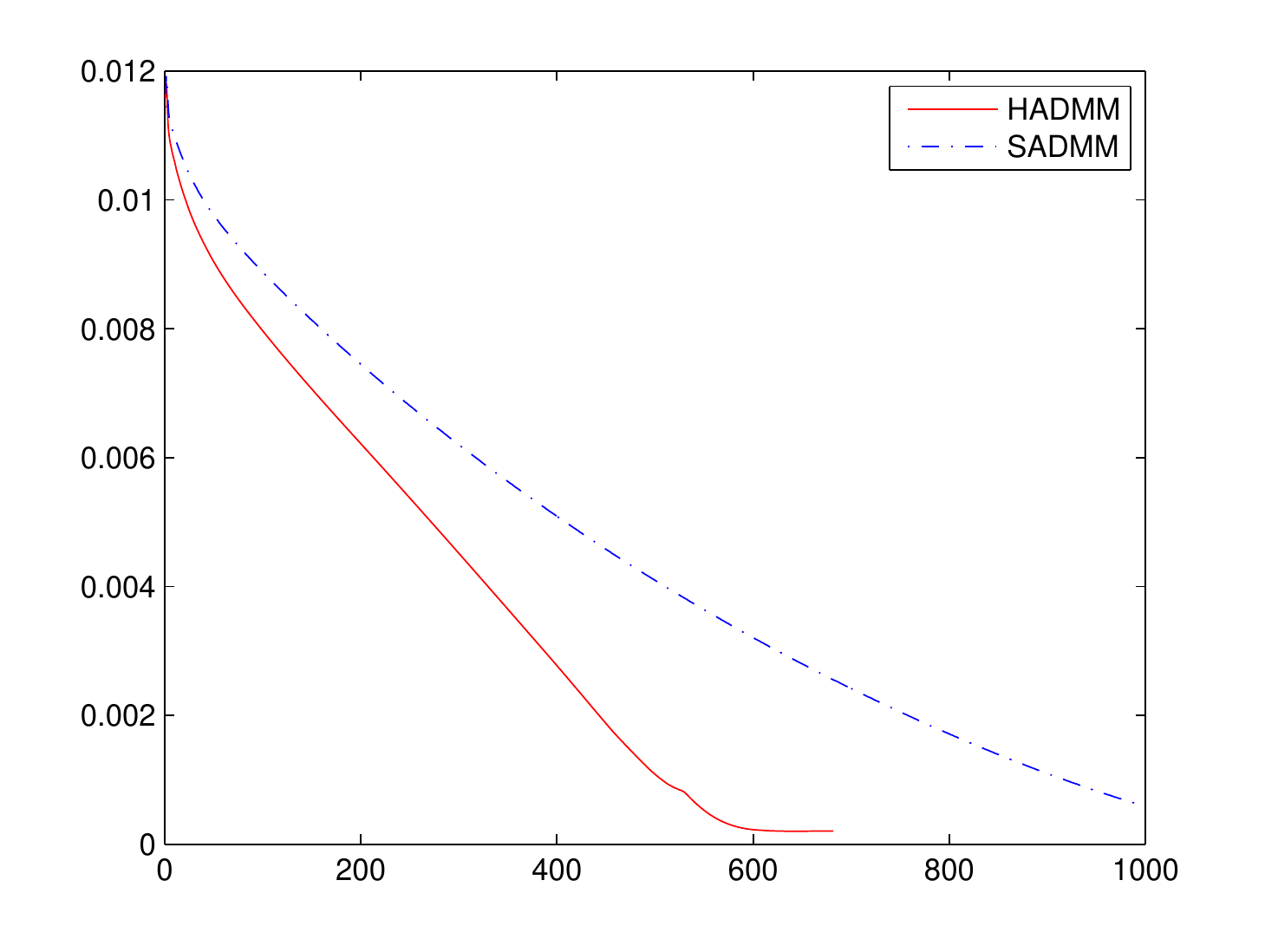}
}
\caption{Comparison the performance  of HADMM and SADMM }
\label{fig}
\end{figure}

\section{Conclusion}

In this paper, we conducted  a convergence analysis on BADMM in the
absence of convexity. We have shown that under certain conditions,  
the BADMM algorithm can converge to a stationary
point for sub-analytic functions. More importantly, our analysis is based on the
sufficient descent property of the auxiliary function, instead of the augmented
Lagrangian function.

 It is worth
noting that the order for updating the primal variables $x^k$ and
$y^k$ plays a key role in our convergence analysis. If we change the
order, namely
 first update $x^k$ and then $y^k$, this may lead  to a
difficulty to derive an relation between $x^k$ and $p^k$. Thus how
to establish the convergence results under this case is our next
subject to study.

\section*{Acknowledgement}

The first author wishes to thank   Dr. Jinshan Zeng for his valuable
suggestions on this paper. This work was partially supported by the
National 973 Programs (Grant No. 2013CB329404),  the Key Program of
National Natural Science Foundation of China (Grant No. 11131006)
and the National Natural Science Foundation of China (Grant No.
111301253).

\ifCLASSOPTIONcaptionsoff
  \newpage
\fi



%

\end{document}